\newcommand{\algor}[1]{}
\newcommand{\lktg}[1]{}
\newcommand{\invadraw}[1]{}
\newcommand{\aronly}[1]{#1}
\newcommand{\jonly}[1]{}
\newcommand{\compile}[1]{#1}
\renewcommand{\ge}{\geqslant}
\renewcommand{\le}{\leqslant}
\renewcommand{\emptyset}{\varnothing}
\def\sgn{\mathop{\fam0 sgn}}
\def\t{\widetilde}
\def\R{{\mathbb R}} \def\Z{{\mathbb Z}}  \def\Q{\Bbb Q}
\def\tr{\mathop{\fam0 tr}}
\renewcommand\tr{\qopname\relax o{tr}}
\def\cy{\mathop{\fam0 cy}}
\renewcommand\cy{\qopname\relax o{cy}}
\def\triod{\mathop{\fam0 triod}}
\renewcommand\triod{\qopname\relax o{triod}}
\def\adiag{\mathop{\fam0 adiag}}
\renewcommand\adiag{\qopname\relax o{adiag}}
\theoremstyle{plain}
\newtheorem{theorem}{Теорема}[section]
    \newtheorem{lemma}[theorem]{Лемма}
    \newtheorem{proposition}[theorem]{Утверждение}
    \newtheorem{conjecture}[theorem]{Гипотеза}
    \newtheorem{problem}[theorem]{Проблема}
\newtheorem{example}[theorem]{Пример}
\newtheorem*{example*}{Пример}
\newtheoremstyle{mydefinition}
  {\medskipamount}
  {\medskipamount}
  {\normalfont}
  {\parindent}
  {\bfseries}
  {.}
  { }
  {}
\theoremstyle{mydefinition}
\newtheorem{remark}[theorem]{Замечание}
\begin{document}
 
\newcommand{\mytitle}{Инварианты почти вложений графов в плоскость}
 
\title{\mytitle} 
 
\author{Э. Алкин, А. Мирошников, А. Скопенков}

\address{\emph{Э. Алкин:} Московский Центр непрерывного математического образования.
\newline
\emph{А. Мирошников:} Московский физико-технический институт.
\newline
\emph{А. Скопенков:} Московский Центр непрерывного математического образования, \url{https://users.mccme.ru/skopenko/}.}

\thanks{
Исследование выполнено за счет гранта Российского научного фонда № 25-21-00685, \url{https://rscf.ru/project/25-21-00685/}.
Мы благодарны Д. Акимову, М. Дидину, Д. Мусатову, О. Стырту и особенно Е. Дженжер (Бордачевой), Т. Гараеву и О. Никитенко за полезные обсуждения.
} 
   
\date{}

\begin{abstract}



  

Изображение графа на плоскости называется \emph{почти вложением}, если образы любых двух несмежных симплексов (т.~е. вершин или ребер) не пересекаются.
Почти вложения (точнее, их многомерные обобщения) естественно возникают в комбинаторной геометрии, топологической комбинаторике, а также при изучении вложений гиперграфов.
Мы доказываем некоторые соотношения между инвариантами почти вложений.
Мы показываем связь некоторых из этих соотношений с \emph{гомологиями взрезанного квадрата} графа.
Мы строим почти вложения, реализующие некоторые значения этих инвариантов.

В этом обзоре доступно (для неспециалистов в этой области, в частности, студентов) излагаются некоторые идеи алгебраической и геометрической топологии.
Все необходимые определения напоминаются.
Несмотря на элементарность, обзор мотивирован передним краем науки; имеется несколько гипотез и нерешенных проблем.
\end{abstract}

\maketitle
\tableofcontents

\section{Введение}\label{s:intr}

\emph{О стиле этого текста.} 
Основные идеи представлены на <<олимпиадных>> примерах: на простейших частных случаях, свободных от технических деталей, и со сведением к необходимому минимуму научного языка.
Благодаря этому и большая часть текста доступна для неспециалистов в топологии (в частности, для специалистов по компьютерной науке), и удается быстро добраться до интересных результатов. 

Все необходимые определения (отображения графа в плоскость, числа оборотов замкнутой ломаной вокруг точки и т.д.) напомнены в \S\ref{s:wind}, \S\ref{s:wingra}.
Приводимые там определения более простые, чем во многих учебниках.

\medskip
Изображения без самопересечений графов на плоскости (т.~е. вложения или плоские графы) активно изучаются.
Также интересны изображения графов, имеющие <<умеренные>> самопересечения, например, определяемые чуть ниже \emph{почти вложения}. 

\begin{figure}[ht]\centering
    \compile{
        \includegraphics[scale=1.5]{resko00.23.eps} 
    }
    \caption{Непланарные графы $K_5$ и $K_{3,3}$}
    \label{planar}
\end{figure}

Одно из доказательств непланарности (т.~е. невложимости в плоскость) графа $K_5$ или $K_{3,3}$ обобщается до следующего результата.

\begin{theorem}[Ханани--Татт; ван Кампен--Флорес]\label{t:hatuvk}
Пусть граф $K$ --- это $K_5$ или $K_{3,3}$. 
Для любого отображения $K\to\R^2$ существуют два несмежных ребра графа $K,$ образы которых пересекаются.
\end{theorem}

Эта теорема вытекает \cite[\S1.4]{Sk18} из ее количественной версии (теорема~\ref{t:hatuvkfl}).

\begin{figure}[ht]\centering
\compile{
    \includegraphics[width=4.6cm]{emb-k4.eps}
    \qquad
    \includegraphics[width=4.6cm]{alm-k4.eps}
    \qquad
    \includegraphics[width=3cm]{square-not-emb.eps}
}
    \caption{Вложение, почти вложение и отображение (изображение), которое не является почти вложением}
    \label{f:k4}
\end{figure}
 
\begin{figure}[ht]\centering
\compile{
    \includegraphics[scale=0.7]{almk5-emb.eps}
    }
    \caption{Вложение и почти вложение графа $K_5$ без ребра}
    \label{k5}
\end{figure}  

Отображение $f:K\to\R^2$ произвольного графа $K$ называется \textbf{почти вложением}, если $f(\alpha)\cap f(\beta) = \varnothing$ для любых двух несмежных симплексов (т.~е. вершин и ребер) $\alpha,\beta\subset K$. Т.~е. если

(i) образы несмежных ребер не пересекаются,

(ii) образ любой вершины не лежит на образе никакого ребра, несмежного с этой вершиной, 

(iii) образы различных вершин различны.

Например, 

$\bullet$ почти вложение графа $K_3$ в плоскость определяется тройкой точек $A,B,C$ на плоскости вместе с тройкой ломаных $A\ldots B$, $B\ldots C$, $C\ldots A$, каждая из которых проходит ровно через две точки из $\{A,B,C\}$;

$\bullet$ почти вложение графа $K_4$ в плоскость определяется четверкой точек $A,B,C,D$ на плоскости вместе с тремя парами $\{A\ldots B,C\ldots D\}$, $\{A\ldots C,B\ldots D\}$, $\{A\ldots D,B\ldots C\}$ ломаных, в каждой паре из которых ломаные не пересекаются;   

$\bullet$ почти вложений графов $K_5$ и $K_{3,3}$ в плоскость не существует (теорема~\ref{t:hatuvk}).

Более подробная мотивировка понятия почти вложения приведена в замечании~\ref{r:ae_graph_mot}.

\begin{remark}[обсуждение определения почти вложения]\label{ss:ae_def_discussion}
Этот текст касается в первую очередь \emph{инвариантов} почти вложений, а не проблем \emph{существования} почти вложений. 
Поэтому мы сохраняем в определении свойства (ii, iii), выполнения которых можно добиться достаточно малым шевелением отображения, с сохранением свойства (i).  
\emph{Если граф допускает почти вложение в плоскость, то граф планарен} (это следует из теоремы \ref{t:hatuvk} и теоремы Куратовского \cite[Theorem XI.60]{Tu84}).
Однако существуют значения инвариантов, реализуемые некоторыми почти вложениями, но не реализуемые никакими вложениями (это следует из примеров \ref{e:k3}.ac, а также из второго предложения после формулировки теоремы~\ref{t:radonae} вместе с примером~\ref{e:k3}.b и теоремой~\ref{t:k4}).
\end{remark}

{\bf Описание содержания обзора.}

\emph{Мы изучаем} целочисленные инварианты почти вложений: оборотное (\S\ref{s:wingra}), циклическое и триодическое числа\footnote{Видимо, определения триодического и циклического чисел не встречаются в литературе. Но эти определения просты и естественны, поэтому их следует считать известными.
} (\S\ref{s:gawh}).
Здесь инварианты "--- просто числа, сопоставленные почти вложению (мы не используем понятие \emph{почти изотопии}, относительно которой эти числа инвариантны, вне замечания \ref{r:ae_graph_mot}.a и \S\ref{s:class}).

\emph{Мы приводим} некоторые \emph{соотношения} между значениями этих инвариантов. 

Некоторые из них связаны с \emph{теоремой~\ref{t:borsuk} Борсука--Улама}.
Для графа $K_4$ такое соотношение (теорема~\ref{t:radonae}) --- эквивалентная версия знаменитой топологической теоремы~\ref{t:radon} Радона. 
Для графа, полученного из $K_5$ удалением ребра, такое соотношение (теорема~\ref{t:k5-e}.а) --- эквивалентная версия теоремы~\ref{t:hatuvk} (точнее, ее количественной версии "--- знаменитой теоремы~\ref{t:hatuvkfl} Ханани--Татта--ван Кампена--Флореса).
Более сильное соотношение для графа $K_5$ без ребра (теорема~\ref{t:k5-e}.b) "--- недавний нетривиальный результат Тимура Гараева \cite{Ga23}.
Он уже не следует из теоремы Борсука--Улама или ее аналога, а \emph{доказывается геометрически}.
Утверждения о нечетности циклического (\ref{pr:off}) и триодического (\ref{pr:triod}) чисел "--- простые следствия теоремы Борсука--Улама (см. вывод в замечании~\ref{r:cyc}.c; мы также приводим прямые элементарные доказательства). 

Другие доказываемые соотношения 
связаны с гомологиями \emph{конфигурационного  пространства} $\widetilde K$ 
размещений из 
графа~$K$ по две точки (см. подробное определение \aronly{и обсуждение }в замечании~\ref{r:delpro-def}).
Это соотношения между инвариантами почти вложений графа $K_4$ (утверждения~\ref{p:wu-conj0} и~\ref{p:wu-conj}), графа $K_5$ без ребра (утверждение~\ref{p:k4-k5}), и графа $K_{3,3}$ без ребра (утверждение~\ref{p:k4-k33}).

\emph{Мы рассматриваем инварианты, обобщающие} оборотные, циклические и триодические числа.
Эти инварианты, называемые $C$-числами Ву, параметризованы элементами~$C$ \emph{группы гомологий} $H_1(\widetilde K;\Z)$ 
(см. определение в абзаце перед проблемой~\ref{pm:Wu}).
Оказывается, что \emph{любое $C$-число Ву выражается через оборотные, циклические и триодические числа} (теоремы~\ref{t:poly-Wu} и~\ref{t:Wu}).


\emph{Мы приводим построения} почти вложений, реализующих некоторые значения изучаемых инвариантов. 
Один из примеров (теорема~\ref{t:k4}) показывает, что нет ограничений на значения оборотных чисел для графа $K_4$, кроме теоремы~\ref{t:radonae}. 
(Более точно, в предыдущем предложении <<оборотных чисел>> нужно заменить на <<оборотных чисел вида $w_f(j)$>>; см. определение перед утверждением~\ref{p:autom} и обсуждение перед проблемой~\ref{pm:wind}.)
Это недавний результат Э. Алкина и А. Мирошникова \cite{AM25}.
Другой пример (\ref{e:k33-strong}) показывает, что нет ограничений на значения оборотных чисел, упомянутых в теореме~\ref{t:k33-odd}, для графа $K_{3,3}$ без ребра, кроме теоремы~\ref{t:k33-odd}.
Остальные примеры просты и наглядны (примеры~\ref{p:off},~\ref{p:triod}; рисунки~\ref{f:k4-pm3},~\ref{f:k23-conj}).










\emph{Мы выдвигаем нерешенную проблему~\ref{pm:wind} и ее обобщение~\ref{pm:Wu}} об описании значений инвариантов, реализуемых почти вложениями.
Мы выделяем ее частный случай о реализации почти вложениями значений оборотных чисел для графа, полученного из $K_{5}$ удалением ребра (гипотеза~\ref{c:k5-e}), 
а также обсуждаем связанные с ней гипотезы (\ref{r:int_analogs} и~\ref{c:tree}).
Проблемы~\ref{pm:wind}~и~\ref{pm:Wu} для графа $K_4$ решаются теоремами~\ref{t:radonae},~\ref{t:k4} (для проблемы \ref{pm:Wu} нужны еще дополнительные рассуждения после ее формулировки).
Для решения самой проблемы~\ref{pm:Wu} 
могут быть полезны вышеупомянутые теоремы~\ref{t:poly-Wu} и \ref{t:Wu} о выражаемости.

Все упомянутые выше теоремы, утверждения и примеры, кроме теорем~\ref{t:poly-Wu},~\ref{t:Wu}, утверждений~\ref{p:k4-k5},~\ref{p:k4-k33},~\ref{p:wu-conj0},~\ref{p:wu-conj} и примера~\ref{e:k33-strong}, следует считать известными (даже если они не были ранее опубликованы в таком виде).

Несмотря на элементарность, обзор подводит читателя к переднему краю науки, см. замечания~\ref{r:gene},~\ref{r:gap}, статьи \cite{IKN+, Ni22, Ga23, AM25} (графы на плоскости и в пространстве), \cite{Sk18} и 
ссылки в \cite[первый пункт в последнем абзаце части `Some motivation']{SS23}
(гомологии пространств размещений/сочетаний из графа по две точки).

 
\begin{remark}[к мотивировке понятия почти вложения для графов]
\label{r:ae_graph_mot}
(a) Непрерывная деформация в классе почти вложений (\emph{почти изотопия}) изучается для графов в $\R^3$, начиная с \cite{Ta94} (под названиями \emph{слабая гомотопия} или \emph{вершинная гомотопия}). Ссылки на более поздние статьи, в которых изучается это понятие, см. в \cite{FN09}.


(b) Естественное более общее понятие (\emph{AT graph}) изучается, начиная с \cite{KLN}; ссылки на более поздние статьи, в которых оно изучается, см. в \cite{Ky20}.
Многомерный аналог этого общего понятия настолько полезен, что он использовался в \cite[Disjunction Theorem 3.1]{Sk02} без явного выделения термина.



(c) Ясно, что свойство <<быть почти вложением>> сохраняется при достаточно малых шевелениях отображения (в отличие от свойства <<быть вложением>>, аналогичная устойчивость справедлива для свойств <<быть $\Z_2$-вложением>> и <<быть $\Z$-вложением>>; см. определения в замечании~\ref{r:zz2}).
Поэтому, аппроксимируя непрерывное отображение кусочно-линейным (PL), получаем, что для графов в поверхностях 

$\bullet$ топологическое вложение может быть приближено PL почти вложением;

$\bullet$ PL или топологическое почти вложение может быть приближено PL почти вложением общего положения;

$\bullet$ PL почти вложимость эквивалентна топологической почти вложимости.







(d) Почти вложения графов интересны также как частный случай почти вложений гиперграфов в многомерные пространства, см. обсуждение основных результатов в~\S\ref{s:higher}.
\end{remark}

\begin{remark}[алгебраические версии почти вложений]
\label{r:zz2}
При изучении вложений (включая многомерный случай) естественно возникают алгебраические версии почти вложений 
($\Z_2$- и $\Z$-вложения). 

Возьмем отображение $h : K \to M$ общего положения (см. строгое определение в \cite[Definition 1.1.6.a при $k=1$]{Sk24}) графа $K$ в двумерную поверхность $M$. 
Оно называется \emph{$\Z_2$-вложением}, если число $|h \sigma \cap h \tau|$ четно для любой пары $\sigma, \tau$ несмежных ребер.
Для ориентируемой поверхности $M$, оно называется \emph{$\Z$-вложением}, если сумма знаков точек пересечений <<ориентированных ломаных>> $h \sigma$, $h \tau$ равна нулю для любой пары $\sigma, \tau$ несмежных ребер.

Понятие $\Z_2$-вложения (другое название "--- \emph{отображение Ханани--Татта})
появилось в 1930-х годах (см. теорему~\ref{t:hatuvk}) и активно изучается с 2000-х годов, см. обзор~\cite{Sc13} и \cite{SS13, FK19, Bi21}. Оно изучается не только для изображений графов на поверхностях, но и для многомерного случая (см. замечание~\ref{r:ae_complex_mot}.b).

{\bf Теорема.}
{\it (a) Если граф $\Z_2$-вложим в ленту Мебиуса, то этот граф вложим в ленту Мебиуса. \cite{CKP+}, \cite{PSS}.
Если граф $\Z_2$-вложим в тор, то этот граф вложим в тор. \cite{FPS}.

(b) Существует связный граф $\Z_2$-вложимый в сферу с четырьмя ручками, но не вложимый в нее. \cite{FK17}.}

Неизвестно, влечет ли $\Z$-вложимость графа в ориентируемую поверхность вложимость этого графа в нее.

\emph{Для любой поверхности существует полиномиальный (от размера графа) алгоритм, который по графу $K$ проверяет $\Z_2$-вложимость графа $K$ в эту поверхность}.
Это в сущности известно. 
Это следует из того, что свойство графа <<быть $\Z_2$-вложимым в данную поверхность>> сохраняется при переходе к его минору (т.~е. при удалении ребра или стягивании ребра).
Таким образом, по теореме Робертсона--Сеймура о минорах~\cite{RS04} существует конечное число запрещенных миноров, характеризующих такое свойство. 
Также, существует полиномиальный (от размеров графов) алгоритм, проверяющий является ли данный граф минором другого данного графа~\cite{KKR}. 
Поэтому нужный алгоритм получается последовательным запуском алгоритмов проверки наличия каждого запрещенного минора в графе.\footnote{Поскольку мы лишь обосновываем существование такого алгоритма, а не приводим конкретного способа его построения, нам не нужно знать алгоритма, перечисляющего запрещенные миноры. То есть это обоснование неконструктивно. Например, не выписаны запрещенные миноры даже для $\Z_2$-вложимости в тор. Кроме того, полиномиальный алгоритм проверки наличия запрещенного минора в графе может на практике быть даже хуже экспоненциального, т.~е. может быть <<галактическим>>~\cite{GA}.
}

Замечание, аналогичное предыдущему абзацу, справедливо для $\Z$-вложимости. 

\end{remark}

{\bf Соглашения.}
Если к утверждению не приведено ни доказательство, ни ссылка на него, то оно несложно.
Определения важных понятий даны \textbf{жирным шрифтом}, чтобы их было проще найти.
\emph{Замечания} не используются в дальнейшем. 
Рассматриваемые точки, замкнутые ломаные и ломаные расположены на плоскости $\R^2$.

\section{Число оборотов: определение и обсуждение}\label{s:wind}
 
Пусть $O,A,B,A_1,\ldots,A_m$ "--- точки.  
 
Предположим, что $A\ne O$ и $B\ne O$ (но, возможно, $A=B$). 
\emph{Ориентированным (или направленным) углом} $\angle AOB$ называется число  $t\in(-\pi,\pi]$, такое что вектор $\overrightarrow{OB}$ сонаправлен вектору, полученному из $\overrightarrow{OA}$ вращением на угол $t$ против часовой стрелки. 
(Если рассматривать векторы на плоскости как комплексные числа, то можно переписать это условие как $\overrightarrow{OB}\upuparrows e^{it}\overrightarrow{OA}$.)

\textbf{Замкнутой ломаной} называется упорядоченный набор точек (не обязательно различных).\footnote{
\label{fn:sets}Таким образом, замкнутая ломаная (определенная здесь) не является подмножеством плоскости.
Тем не менее, иногда мы работаем с замкнутой ломаной $A_1\ldots A_m$ как с объединением отрезков $A_iA_{i+1}$, например, мы пишем <<ломаная, не проходящая через точку>>.}
В этом тексте мы иногда обозначаем упорядоченный набор $(A_1, \ldots, A_n)$ через $A_1\ldots A_n$ или $A_1, \ldots, A_n$.

{\bf Числом оборотов} $w(l) = w(l,O)$ замкнутой ломаной $l=A_1\ldots A_m$ вокруг не лежащей на ней точки $O$ называется количество оборотов при вращении вектора, начало которого находится в точке $O$, а конец обходит ломаную в заданном направлении.
Строго говоря, 
$$2 \pi \cdot w(l) = 2\pi \cdot w(l,O) := \angle A_1OA_2+\angle A_2OA_3+\ldots+\angle A_{m-1}OA_m+\angle A_mOA_1$$
"--- сумма ориентированных углов.
См. рисунки \ref{f:abco} и \ref{f:wn_examples}.

\begin{figure}[ht] \centering
\compile{
    \includegraphics[scale=0.65]{abco.eps}
    }
    \caption{$w(ABC) =\dfrac{1}{2\pi} \left( \angle AOB + \angle BOC + \angle COA \right) = +1 $}
    \label{f:abco}
\end{figure}

\begin{figure}[ht] \centering
\compile{
    \includegraphics[scale=1.0]{winding-examples.eps}
    }
    \caption{Числа оборотов равны $0,~+1,~-1,~+2$}
    \label{f:wn_examples}
\end{figure}

Пусть $ABC$ "--- треугольник и  $O$ "--- точка внутри него.
Тогда $w(ABCABC) = 2 w(ABC) = \pm 2$.
Этот пример показывает, что числа оборотов для разных ломаных с одинаковым объединением их отрезков могут быть разными.

\begin{example}\label{e:wn-surj} Для любых целого числа $n$ и точки $O$ существует замкнутая ломаная, число оборотов которой вокруг $O$ равно $n$.
\end{example}

\begin{proof}[Построение]
    Если $n=0$, то примером является замкнутая ломаная, состоящая из одной точки.
    Если $n\ne 0$, то возьмем правильный треугольник $ABC$ с центром $O$, ориентированный против часовой стрелки при $n>0$ (см. рисунок \ref{f:abco}) и по часовой стрелке иначе.
    Определим замкнутую ломаную $l$ формулой $l := \underbrace{ABC\ldots ABC}_{|n| \text{ раз}}$.
    Имеем $w(l) = |n| \cdot w(ABC) = n$.
\end{proof}

\begin{proposition}\label{p:noncl} 
    Число оборотов $w(A_1\ldots A_m)$ является целым числом. 
\end{proposition}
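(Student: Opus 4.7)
The plan is to reduce integrality to a purely telescoping identity modulo $2\pi$, using only the definition of the oriented angle.

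First I would fix an arbitrary reference ray emanating from $O$ and, for each vertex $A_i$, let $\varphi_i$ be a measure of the angle from this reference ray to $\overrightarrow{OA_i}$. Each $\varphi_i$ is determined only modulo $2\pi$, but this ambiguity will be harmless. Directly from the definition of the oriented angle $\angle A_iOA_{i+1}\in(-\pi,\pi]$ (namely, that rotating $\overrightarrow{OA_i}$ through this angle yields a vector codirectional with $\overrightarrow{OA_{i+1}}$), we obtain
\[
\angle A_iOA_{i+1}\;\equiv\;\varphi_{i+1}-\varphi_i\pmod{2\pi}\qquad\text{for every }i,
\]
with cyclic convention $A_{m+1}:=A_1$, so also $\varphi_{m+1}\equiv\varphi_1\pmod{2\pi}$.

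Summing over $i=1,2,\ldots,m$, the right-hand side telescopes to $\varphi_{m+1}-\varphi_1\equiv 0\pmod{2\pi}$, hence
\[
2\pi\cdot w(A_1\ldots A_m)\;=\;\sum_{i=1}^{m}\angle A_iOA_{i+1}\;\equiv\;0\pmod{2\pi},
\]
which yields $w(A_1\ldots A_m)\in\Z$.

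There is essentially no obstacle in this proof: the single point to verify carefully is the congruence displayed above, which is immediate from the definition of the oriented angle, and the ambiguity of each $\varphi_i$ modulo $2\pi$ disappears because the whole calculation is performed modulo $2\pi$. I would remark that a more geometric variant chooses $\varphi_i$ \emph{continuously} along the closed polyline (possible because every edge avoids $O$), turning the congruence into an equality $\angle A_iOA_{i+1}=\varphi_{i+1}-\varphi_i$; this ``lifting'' viewpoint is the germ of the general definition of the degree of a map $S^1\to S^1$, but it is not needed for the present finite-sum statement.
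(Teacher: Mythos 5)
Your proof is correct and is essentially the same as the paper's: the paper chains the codirectionality relations $\overrightarrow{OA_{j+1}}\upuparrows e^{it_j}\overrightarrow{OA_j}$ around the cycle to conclude that $e^{i(t_1+\cdots+t_m)}$ fixes the direction of $\overrightarrow{OA_1}$, which is precisely the multiplicative form of your additive telescoping of arguments modulo $2\pi$. No substantive difference.
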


Доказательства утверждений этого параграфа см. в \cite{ABM+}.
Указание к другому доказательству утверждения \ref{p:noncl} см. в \cite[Hint after Proposition 3.4]{ANS}.

\begin{theorem}[Борсук--Улам] \label{t:borsuk}
    Пусть замкнутая ломаная $A_1\ldots A_{2k}$ не проходит через точку $O$ и симметрична относительно $O$ (т.~е. $O$ "--- середина отрезка $A_jA_{k+j}$ для каждого $j=1,\ldots,k$).
    Тогда число оборотов этой замкнутой ломаной вокруг точки $O$ нечетно.
\end{theorem}

Следующие обозначение и утверждение \ref{p:w'} будут полезны (например, для доказательства утверждений \ref{p:k23}, \ref{pr:off}, \ref{pr:triod}, построения примеров \ref{e:k23}, \ref{e:maps}, \ref{e:radonae} и введения определений в \S \ref{s:gawh}).

\textbf{Ломаной} называется упорядоченный набор точек (не обязательно различных).\footnote{В математике иногда разные вещи имеют одинаковые формализации, а разница проявляется в том, что мы с этими вещами делаем.
Тогда формально одинаковые объекты называются по-разному, чтобы сделать более понятными операции над ними.
Например, в этом тексте упорядоченный набор точек называется и ломаной, и замкнутой ломаной. 
}

Пусть $l=A_1\ldots A_m$ "--- ломаная, не проходящая через точку $O$. 
Определим действительное число $w'(l)=w'(l,O)$ формулой
$$2\pi \cdot w'(l):= \angle A_1OA_2+\angle A_2OA_3+\ldots+\angle A_{m-1}OA_m.$$
Очевидно, что

$\bullet$ $w'(A_1\ldots A_mA_1) = w(A_1\ldots A_m)$;

$\bullet$ $w'(A_1\ldots A_m) = w'(A_1\ldots A_j) + w'(A_j\ldots A_m)$ для каждого $j=1,\ldots,m$; 

$\bullet$ если точки $A_2,\ldots, A_{m-1}$ лежат внутри угла $\angle A_1 O A_m$, то  
$2\pi w'(A_1\ldots A_m) = \angle A_1 O A_m$.

\begin{proposition}\label{p:w'}
    Имеем $\angle A_1OA_m = 2\pi w'(A_1\ldots A_m)+2\pi n$ для некоторого целого $n$. 
\end{proposition}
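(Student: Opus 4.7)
Мой план --- повторить рассуждение из доказательства утверждения \ref{p:noncl}, с той поправкой, что наша ломаная не замкнута.

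Первым шагом я введу обозначение $t_j := \angle A_jOA_{j+1}$ для $j=1,\ldots,m-1$ (суммирование теперь идёт до $m-1$, а не до $m$, как в доказательстве утверждения \ref{p:noncl}). По определению направленного угла $\overrightarrow{OA_{j+1}} \upuparrows e^{it_j}\overrightarrow{OA_j}$. Последовательно применяя это соотношение (что корректно, так как поворот на угол $\alpha$, а затем на угол $\beta$ эквивалентен повороту на $\alpha+\beta$), я получу цепочку сонаправленностей
$$\overrightarrow{OA_m} \upuparrows e^{it_{m-1}}\overrightarrow{OA_{m-1}} \upuparrows e^{i(t_{m-1}+t_{m-2})}\overrightarrow{OA_{m-2}} \upuparrows \ldots \upuparrows e^{i(t_{m-1}+\cdots+t_1)}\overrightarrow{OA_1}.$$
В итоге $\overrightarrow{OA_m} \upuparrows e^{i\cdot 2\pi w'(A_1\ldots A_m)}\overrightarrow{OA_1}$.

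Вторым (и заключительным) шагом я сравню это соотношение с равенством $\overrightarrow{OA_m} \upuparrows e^{i\angle A_1 O A_m}\overrightarrow{OA_1}$, которое выполняется непосредственно по определению направленного угла $\angle A_1OA_m$. Поскольку для ненулевого вектора $\vec u$ сонаправленность $e^{i\alpha}\vec u \upuparrows e^{i\beta}\vec u$ равносильна условию $\alpha-\beta \in 2\pi\Z$, я заключу, что $\angle A_1OA_m - 2\pi w'(A_1\ldots A_m) \in 2\pi\Z$, а это в точности требуемое равенство.

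Принципиальное отличие от утверждения \ref{p:noncl} состоит в том, что у замкнутой ломаной вектор $\overrightarrow{OA_1}$ возвращается к самому себе и сумма направленных углов заведомо кратна $2\pi$ (целое число оборотов); в открытом же случае сумма направленных углов совпадает с $\angle A_1OA_m$ лишь по модулю $2\pi$, что и даёт слагаемое $2\pi n$. Никакого технического препятствия я не предвижу: доказательство полностью параллельно уже проведённому в утверждении \ref{p:noncl} и по существу заключается в том, чтобы просто не доводить телескопическое произведение экспонент до последней вершины $A_1$.
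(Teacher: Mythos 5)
Ваше доказательство верно. Отличие от авторского --- чисто структурное: в статье утверждение \ref{p:w'} выводится из \emph{формулировки} утверждения \ref{p:noncl} в одну строку --- из очевидного равенства $2\pi w(A_1\ldots A_m) = 2\pi w'(A_1\ldots A_m)+\angle A_mOA_1$ и целочисленности $w$ сразу следует требуемое (с точностью до замены $\angle A_mOA_1$ на $-\angle A_1OA_m$ по модулю $2\pi$). Вы же вместо этого повторяете \emph{доказательство} утверждения \ref{p:noncl}, обрывая телескопическое произведение экспонент на вершине $A_m$ и сравнивая результат с определением угла $\angle A_1OA_m$. Оба пути опираются на одну и ту же выкладку; авторский короче при уже доказанном утверждении \ref{p:noncl}, ваш --- самодостаточен, делает механизм явным и попутно обходит мелкую тонкость со знаком ($\angle A_1OA_m$ против $\angle A_mOA_1$), которая в авторском варианте остаётся неявной.
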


\begin{proposition}\label{p:rel}
    Возьмем точки $P_0$ и $P_1$, соединенные ломаной, не пересекающейся с замкнутой ломаной $l$. 
    Тогда  $w(l,P_0)=w(l,P_1)$. 
\end{proposition}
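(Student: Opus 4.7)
The plan is to show that the function $O \mapsto w(l, O)$ is locally constant on $\R^2 \setminus l$ (where I treat $l$ as the union of its segments). Combined with Proposition~\ref{p:noncl}, which ensures that $w(l, \cdot)$ takes integer values, this is the standard ``continuous integer-valued function is locally constant'' argument, and it immediately gives the proposition.

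First, by induction on the number of segments of the polyline joining $P_0$ to $P_1$, I reduce to the case where this polyline is a single segment $P_0P_1$ disjoint from $l$. Parameterize $P(t) := (1-t)P_0 + t P_1$ for $t \in [0, 1]$ and set $f(t) := w(l, P(t))$; I want to show $f$ is continuous on $[0, 1]$. Writing
$$2\pi f(t) = \sum_{j=1}^{m} \angle A_j\, P(t)\, A_{j+1} \qquad (A_{m+1} := A_1),$$
it suffices to show that each summand is a continuous function of $t$. Then $f$, being integer-valued and continuous on the connected interval $[0, 1]$, must be constant, so $w(l, P_0) = f(0) = f(1) = w(l, P_1)$.

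The heart of the argument, and the main subtlety, is the continuity of the signed angle $O \mapsto \angle A_j O A_{j+1}$ as a function of $O$. This function is continuous except where $\vec{OA_j}$ and $\vec{OA_{j+1}}$ are anti-parallel, i.e.\ where $O$ lies on the open segment $(A_j, A_{j+1})$: there it jumps by $2\pi$ as $O$ crosses from one side to the other. On the rest of the line through $A_j, A_{j+1}$ (the two rays outside the segment) the vectors are co-directed, the angle is $0$, and there is no jump. Hence $\angle A_j O A_{j+1}$ is continuous on $\R^2 \setminus \overline{A_jA_{j+1}}$. Since $P(t) \notin l$ for all $t \in [0, 1]$ by assumption, the path $P$ avoids every closed edge $\overline{A_jA_{j+1}}$ of $l$, so each of the $m$ summands is continuous in $t$, and the argument above concludes the proof.
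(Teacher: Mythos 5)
Your proof is correct and follows exactly the first route the paper itself suggests (the paper gives only the hint ``используйте соображения непрерывности или выведите из леммы \ref{l:square}.a''): you make the continuity argument precise by showing each signed angle $\angle A_jOA_{j+1}$ is continuous off the closed segment $A_jA_{j+1}$, so $w(l,\cdot)$ is a continuous integer-valued function along the connecting polyline and hence constant. No gaps; this is a legitimate filling-in of the intended argument.
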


Обозначим через $l^{-1}$ ломаную, полученную из ломаной $l$ прохождением в противоположном порядке.
Очевидно, что $w'(l) = -w'(l^{-1})$. 

\emph{Конкатенацией} ломаных $l_1 = A_1 \ldots A_m C$ и $l_2 = C B_1\ldots B_k$ называется  ломаная 
$$l_1l_2 := A_1 \ldots A_m C B_1 \ldots B_k.$$
Имеем $w'(l_1l_2) = w'(l_1) + w'(l_2)$ для любых точки $O$ и ломаных $l_1, l_2$, не проходящих через $O$.

\emph{Конкатенацией} замкнутых ломаных 
$l_1 = A_1 \ldots A_m C$ и $l_2 = B_1\ldots B_k C$ называется замкнутая ломаная 
$$l_1l_2 := A_1 \ldots A_m C B_1 \ldots B_k C.$$

\begin{proposition}\label{p:k23}
    Пусть $O$, $A$, $B$ "--- три попарно различные точки.
    Пусть $l_1$, $l_2$, $l_3$ "--- ломаные, соединяющие точку $A$ с точкой $B$, и не проходящие через точку $O$.
    Тогда 
    \linebreak
    $w(l_1 l_2^{-1}) + w(l_2 l_3^{-1}) = w(l_1 l_3^{-1})$.
\end{proposition}

\begin{proof} 
    Имеем
    $$w(l_1 l_2^{-1}) + w(l_2 l_3^{-1}) = w'(l_1) + w'(l_2^{-1}) + w'(l_2) + w'(l_3^{-1}) = w'(l_1) + w'(l_3^{-1}) = w(l_1 l_3^{-1}).$$  
\end{proof}

\begin{example}[ср. утверждение \ref{p:k23}; см. построение в \S\ref{s:wind}.A]\label{e:k23}
    Пусть $O$, $A$, $B$ "--- три попарно различных точки.
    Для любых трех целых чисел $n_1, n_2, n_3$, таких что $n_1 + n_2 = n_3$, существуют три ломаные $l_1, l_2, l_3$, соединяющие точку $A$ с точкой $B$, не проходящие через точку $O$, и такие, что
    $$w(l_1l_2^{-1}) = n_1, \qquad w(l_2l_3^{-1}) = n_2 \quad\text{и}\quad w(l_1l_3^{-1}) = n_3.$$
\end{example}

Простой пример \ref{e:k23} предваряет более интересные пример \ref{e:maps} и теорему \ref{t:k4}.

\section*{\ref{s:wind}.A. Построение примера \ref{e:k23}}

Для построения примеров в этом тексте не нужны \emph{сложные} картинки --- достаточно \emph{простых} преобразований \emph{простых} картинок. 
Например, полезно преобразование на рисунке \ref{f:fingermove}. 
На этом и других рисунках мы изображаем \emph{кривыми} ломаные с большим числом звеньев. Вершины ломаной расположены на кривой в порядке, обозначенном стрелками. 

\begin{figure}[ht] \centering
\compile{
    \includegraphics[scale=.9]{fingerMoves2.eps}
    }
    \caption{<<Пальцевые движения>> ломаной $l$ вокруг точки $O$: положительное (слева) и отрицательное (справа)}
    \label{f:fingermove}
\end{figure}

\begin{proof}[Построение примера \ref{e:k23}]
    Возьмем такую точку $C$, что ломаная $ACB$ не проходит через точку $O$. 
    Положим $l_1 = l_2 = l_3 = ACB$.
    Сделаем
    
    $\bullet$ $|n_1|$ положительных/отрицательных <<пальцевых движений>> (рисунок~\ref{f:fingermove}) ломаной $l_1$ вокруг точки $O$, если $n_1$ положительно/отрицательно соответственно;

    $\bullet$ $|n_2|$ положительных/отрицательных <<пальцевых движений>> ломаной $l_3^{-1}$ вокруг точки $O$, если $n_2$ положительно/отрицательно соответственно.

    Получим $w(l_1 l_2^{-1}) = n_1$, $w(l_2 l_3^{-1}) = n_2$ и, ввиду утверждения \ref{p:k23}, $w(l_1l_3^{-1}) = n_3$.
\end{proof}

\begin{remark}\label{r:rigor} 
В вышеприведенном рассуждении <<пальцевые движения>> определены картинкой, а не строгим построением. 
Иногда это считается достаточным для такого рода результатов в научных журналах.  
Однако мы приведем более строгое построение примера \ref{e:k23}. 
Для более сложных результатов такая строгость уже является необходимой. 

Более строгое построение оказывается более коротким.  
Хотя оно основано на идее <<пальцевых движений>>, оказывается более экономным не упоминать в нем <<пальцевых движений>>. 
(Der Mohr hat seine Arbeit getan, der Mohr kann gehen. F. Schiller.\footnote{Мавр сделал свое дело, мавр может уйти. Ф. Шиллер.})
Так часто бывает при наведении строгости.  

\aronly{
Строгие доказательства с явным использованием пальцевых движений получаются применением леммы \ref{l:f-move}.
}
\end{remark}

Пусть $l$ --- замкнутая ломаная и $n>0$ целое.
Обозначим через

$\bullet$ $l^0$ замкнутую ломаную, состоящую из одной точки; 

$\bullet$ $l^n$ замкнутую ломаную $\underbrace{l \ldots l}_{n \text{ раз}}$;

$\bullet$ $l^{-n} := (l^{-1})^n$.

Очевидно, что для любых целого $n$ и точки $O\not\in l$ выполнено $w(l^n) = n \cdot w(l)$.
 
\begin{proof}[Более явное построение примера \ref{e:k23}] 
    Выберем точки $C, D, E$ таким образом, что замкнутая несамопересекающаяся ломаная $l := ACBDE$ ориентирована по часовой стрелке, и точка $O$ лежит внутри нее.
    Положим $l_2 := ACB, \quad l_1 := l^{n_1}l_2,\quad l_3 := l^{-n_2}l_2$.
    Тогда  
    $$w(l_1 l_2^{-1}) = w'(l_1) + w'(l_2^{-1}) = w'(l^{n_1}l_2) + w'(l_2^{-1}) = n_1 w(l) + w(l_2l_2^{-1}) = n_1,$$
    $$w(l_2 l_3^{-1}) = w'(l_2) + w'(l_3^{-1}) = w'(l_2) + w'(l_2^{-1}l_3^{n_2}) = w(l_2l_2^{-1}) + n_2 w(l) = n_2.$$
    Тогда $w(l_1l_3^{-1}) = n_3$ по утверждению \ref{p:k23}. 
\end{proof}

\section{Оборотное число: определение и обсуждение}\label{s:wingra}

\begin{remark}[некоторые строгие определения]

{\it Графом} (конечным) $(V,E)$ называется конечное множество $V$ вместе с набором $E\subset {V\choose 2}$ его двухэлементных  подмножеств (т.~е. неупорядоченных пар несовпадающих элементов).
(Общепринятый термин для этого понятия "--- {\it граф без петель и кратных ребер} или {\it простой граф}.)
Элементы данного конечного множества называются {\it вершинами}.
Пары вершин из $E$ называются {\it ребрами}.

Говоря нестрого, граф планарен, если его можно нарисовать <<без самопересечений>> на плоскости. 
Строго говоря, граф называется {\it планарным} (или кусочно-линейно вложимым в плоскость), если существует его \emph{вложение} в плоскость, т.~е.

$\bullet$ набор точек плоскости, соответствующих вершинам,  

$\bullet$ вместе с набором ломаных на плоскости, каждая из которых соединяет те пары из набора точек, которые соответствуют ребрам графа,
 
$\bullet$ причем ломаные несамопересекающиеся, и никакая из ломаных не пересекает внутренность другой ломаной.\footnote{Тогда любые две ломаные либо не пересекаются, либо пересекаются только по их общей концевой вершине.
Мы не требуем, чтобы <<ни одна изолированная вершина не лежала ни на одной из ломаных>>, поскольку этого свойства можно добиться малым шевелением.}
\end{remark}

Обозначим через

$\bullet$ $K$ произвольный конечный граф;

$\bullet$ $[n]$ множество $\{1, 2, \ldots, n\}$;

$\bullet$ $K_n$ полный граф на множестве  $[n]$ вершин;   

$\bullet$ $K_{m,n}$ полный двудольный граф с долями  $[m]$ и $[n]'$ (мы обозначаем через $A'$ копию множества~$A$).  

Мы рассматриваем изображения графов на плоскости, при которых ребра изображаются ломаными (и допускаются пересечения этих ломаных).
Строго говоря, {\bf отображением} (PL, кусочно-линейным) $f:K\to\R^2$ называется

$\bullet$ набор точек плоскости, соответствующих вершинам,  

$\bullet$ вместе с набором ломаных на плоскости, каждая из которых соединяет те пары из набора точек, которые соответствуют ребрам графа. 

\aronly{
(Это то же, что и первые две жирные точки из определения планарности.)  
}

Более точно, каждому ребру соответствует не ломаная, а пара ломаных, получаемых друг из друга прохождением в противоположном порядке. 
Задание ориентации на ребре графа задает выбор одной из этих двух ломаных. 

\begin{figure}[ht] \centering
\compile{
    \includegraphics[scale=0.7]{k4-map-rest2.eps}
    \quad 
    \includegraphics[scale=0.7]{k_4-map-rest3.eps}
    }
    \caption{Отображение $f: K_4 \to \R^2$ (слева); образ $f(C)$ и сужение $f|_C$ (справа)}
    \label{f:restr}
\end{figure}

{\bf Образ $f(\sigma)$} ребра $\sigma$ (при отображении $f$) "--- это объединение отрезков соответствующей ломаной (см. сноску \ref{fn:sets}). 
{\bf Образ} набора ребер "--- это объединение образов всех ребер из набора.

{\bf Сужение} $f|_{ab}$ на ориентированное ребро $ab$ "--- это соответствующая ломаная с началом $f(a)$ и концом $f(b)$.
Последовательность $v_1\ldots v_n$ попарно различных вершин графа $K$ называется (простым) \textbf{ориентированным циклом}, если $v_1v_2, \ldots, v_{n-1}v_n, v_nv_1$ --- ребра графа $K$.
Ориентация цикла $v_1\ldots v_n$ задает ориентацию на ребрах $v_1v_2, \ldots, v_{n-1}v_n, v_nv_1$ (хотя граф $K$ неориентированный).
{\bf Сужением} $f|_C:C\to\R^2$  на ориентированный цикл $C = v_1\ldots v_n$ в графе $K$ называется замкнутая ломаная, полученная конкатенацией ломаных $f|_{v_1v_2},\ldots,f|_{v_{n-1}v_n},f|_{v_nv_1}$ (рисунок \ref{f:restr}).

На этом языке утверждение \ref{p:k23} переформулируется так: для любого отображения 
$f: K_{3,2} \to \R^2$ и точки $O\not\in f(K_{3,2})$ имеем $w(f|_{1'12'2}) + w(f|_{1'22'3}) = w(f|_{1'12'3})$. 

Для $j \in [4]$ обозначим через $C_j$ ориентированный цикл в $K_4$, полученный удалением $j$ из последовательности $1234$.

\begin{proposition}\label{p:rel1}  
    Для любых отображения $f:K_4\to\R^2$ и точки $O\in\R^2-f(K_4)$ выполнено 
    $$\sum_{j=1}^4(-1)^jw(f|_{C_j})=0:\qquad -w(f|_{234})+w(f|_{134})-w(f|_{124})+w(f|_{123})=0.$$ 
\end{proposition}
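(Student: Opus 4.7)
The plan is to expand each triangle winding number $w(f|_{C_j})$ as a sum of the $w'$ values on its three oriented edges, then verify that each edge of $K_4$ contributes zero to the alternating sum. Concretely, using the concatenation identity $w'(l_1l_2)=w'(l_1)+w'(l_2)$ and the identity $w'(f|_{v_1\ldots v_n v_1}) = w(f|_{v_1\ldots v_n})$ displayed before Proposition~\ref{k23}, I would write, for the four oriented triangles,
\begin{align*}
w(f|_{234})&=w'(f|_{23})+w'(f|_{34})+w'(f|_{42}),\\
w(f|_{134})&=w'(f|_{13})+w'(f|_{34})+w'(f|_{41}),\\
w(f|_{124})&=w'(f|_{12})+w'(f|_{24})+w'(f|_{41}),\\
w(f|_{123})&=w'(f|_{12})+w'(f|_{23})+w'(f|_{31}).
\end{align*}

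Next I would form the alternating sum $-w(f|_{234})+w(f|_{134})-w(f|_{124})+w(f|_{123})$ and collect the contributions edge by edge. Each unordered edge $\{i,j\}$ of $K_4$ appears in exactly two of the four oriented triangles above, and I claim that in those two triangles it appears with opposite orientations \emph{once the outer sign $(-1)^j$ is taken into account}. For instance, edge $\{1,3\}$ enters $w(f|_{134})$ as $w'(f|_{13})$ with sign $+1$, and enters $w(f|_{123})$ as $w'(f|_{31})$ with sign $+1$; using $w'(f|_{31})=-w'(f|_{13})$ (noted in the paragraph introducing~$l^{-1}$), these cancel. A direct check of the remaining five edges, using only the relation $w'(f|_{ab})=-w'(f|_{ba})$, shows that every edge of $K_4$ contributes $0$ to the alternating sum, so the total is $0$.

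The only obstacle is bookkeeping of orientations — one must set, once and for all, the oriented edges induced by the oriented cycle $C_j=v_{i_1}v_{i_2}v_{i_3}$ (removing vertex $j$ from $1234$) and then verify the sign pattern case by case. Conceptually this is exactly the combinatorial identity $\partial\circ\partial=0$ for the boundary of the $2$-simplex $1234$: each $2$-face $C_j$ is weighted by $(-1)^j$, each oriented edge appears in two faces with canceling signs, so the formal sum of edges vanishes and hence so does the numerical sum of their $w'$ values. No further topology is needed beyond the concatenation additivity of $w'$ and the reversal rule $w'(l)=-w'(l^{-1})$.
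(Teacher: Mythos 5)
Your proof is correct, but it is organized differently from the paper's. The paper derives Proposition~\ref{p:rel1} in two lines from a reformulation of Proposition~\ref{k23}: for any map $f:K_4-24\to\R^2$ one has $w(f|_{123})+w(f|_{134})=w(f|_{1234})$, and then the cyclic invariance $w(f|_{1234})=w(f|_{2341})$ gives $w(f|_{123})+w(f|_{134})=w(f|_{234})+w(f|_{124})$, i.e.\ the four triangles are grouped into two pairs through the intermediate $4$-cycle $1234$. You instead expand all four terms into the six quantities $w'(f|_{ij})$ and cancel edge by edge, using only the concatenation additivity of $w'$ and the reversal rule $w'(l)=-w'(l^{-1})$; your sign bookkeeping checks out (for example, $\{2,4\}$ contributes $-w'(f|_{42})-w'(f|_{24})=0$, and $\{1,2\}$ contributes $-w'(f|_{12})+w'(f|_{12})=0$). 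The two arguments rest on the same two elementary identities, so the difference is one of packaging: the paper's route reuses an already proved statement and stays short, while yours is self-contained and makes explicit the conceptual content — that the identity is exactly $\partial\circ\partial=0$ for the $2$-skeleton of the simplex on $\{1,2,3,4\}$. That reading is in fact endorsed by the paper itself, which later remarks that Proposition~\ref{p:rel1} (unlike Theorem~\ref{t:radonae}) follows from the relation $123+134+142+243=0$ in the graph; your write-up is essentially a proof of that remark.
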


Напомним, что $K-e$ --- граф, полученный из графа $K$ удалением ребра $e$.  

\begin{proof}[Доказательство утверждения \ref{p:rel1}]
Вот еще одна переформулировка утверждения \ref{p:k23}: 
\linebreak
$w(f|_{123}) + w(f|_{134}) = w(f|_{1234})$ для любого отображения $f: K_4 - 24 \to \R^2$ и точки $O\not\in f(K_4 - 24)$.  
Поэтому $w(f|_{123}) + w(f|_{134}) = w(f|_{1234}) = w(f|_{2341}) = w(f|_{234}) + w(f|_{124}).$
\end{proof}
  
\begin{example}[ср. утверждение \ref{p:rel1}]\label{p:mapso}
    Если $n_1-n_2+n_3-n_4=0$ для целых $n_1,n_2,n_3,n_4$, то существуют точка $O$ и отображение $f:K_4\to\R^2$, такое что $O\not\in f(K_4)$ и $w(f|_{C_j})=n_j$ для каждого $j\in[4]$. 
\end{example}

Пример \ref{p:mapso} строится аналогично примеру \ref{e:k23}.

\medskip
Для вершины $v$ и ориентированного цикла $C$ в графе $K$, таких что $f(v) \not\in f(C)$, назовем \textbf{оборотным числом} число
$$w_f(C,v):=w(f|_C,f(v)).$$
Назовем отображение $f : K \to \R^2$ \textbf{слабым почти вложением}, если $f(v)\not\in f(\sigma)$ для каждой пары из вершины $v$ и ребра $\sigma \not \ni v$. 
Этого свойства достаточно для того, чтобы все оборотные числа $w_f(C, v)$ для любых цикла $C$ и вершины $v \not \in C$ в графе $K$ были определены.
Любое почти вложение является слабым почти вложением.
Обратное верно только для некоторых графов, например, для $K_{3, 1}$ и $K_3$.

Для слабого почти вложения $f: K_4 \to \R^2$ и вершины $j$ графа $K_4$ обозначим 
$$w_f(j) := w_f(C_j, j).$$

\begin{proposition}\label{p:autom} 
Пусть $f:K_4\to\R^2$ --- слабое почти вложение, а $\sigma$ --- перестановка множества $[4]$ и соответствующий автоморфизм графа $K_4$. 
Тогда 
$$w_{f\circ\sigma}(j) = (-1)^{\sigma(j)-j}\sgn\sigma \cdot w_f(\sigma(j)) \quad \text{для каждого} \quad j = 1, 2, 3, 4.$$
\end{proposition}
\begin{proof}[Доказательство]
Разложим перестановку $\sigma$ в композицию транспозиций и воспользуемся следующим наблюдением: если $\sigma$ меняет местами 1 с 2 и оставляет на месте 3, 4, а $g:=f\circ\sigma$, то 
$$w_g(1) = w_f(2),\ \ w_g(2) = w_f(1),\ \ w_g(3) = -w_f(3),\ \ w_g(4) = -w_f(4).$$
\end{proof} 

\begin{figure}[ht] \centering
\compile{
    \includegraphics[scale=.6]{k4-example.eps}
    }
    \caption{Отображение $f:K_4\to\R^2$, такое что $w_f(234, 1)=3$}
    \label{f:k4-ex}
\end{figure}

Следующий пример показывает, что для слабых почти вложений $f: K_4 \to \R^2$ наборами оборотных чисел $w_f(j),~ j\in[4]$ реализуема любая четверка целых чисел.
 
\begin{example}[ср. рисунок~\ref{f:k4-ex}]\label{e:maps}
    Для любых целых $n_1,n_2,n_3,n_4$ существует слабое почти вложение $f:K_4\to\R^2$, такое что $w_f(j)=n_j$ для каждого $j \in [4]$.
\end{example}

Здесь и далее на рисунках вместо $f(i)$ мы коротко пишем $i$.

Простой пример \ref{e:maps} предваряет более интересную теорему \ref{t:k4}.

\aronly{
\section*{\ref{s:wingra}.A. Построение примера \ref{e:maps}} 
}

\begin{proof}[Построение примера \ref{e:maps} для $n_2 = n_3 = n_4 = 0$]
    Квадрат с диагоналями образует слабое почти вложение $g:K_4\to\R^2$; пронумеруем его вершины $g(j),~ j \in [4]$ против часовой стрелки.
    Тогда $w_g(j) = 0$ для каждого $j \in [4]$.
    
    Сделаем $|n_1|$ положительных/отрицательных <<пальцевых движений>> (рисунок~\ref{f:fingermove}) ломаной $g|_{23}$ вокруг точки $g(1)$, если $n_1$ положительно/отрицательно соответственно. 
    Причем сделаем эти преобразования так, чтобы полученное отображение $f$ было слабым почти вложением.
    Тогда $w_f(1)=n_1$ и $w_f(j)=0$ для каждого $j=2,3,4$.
\end{proof}

\aronly{
\begin{proof}[Построение примера \ref{e:maps}]
    Возьмем слабое почти вложение $f$ из идеи построения примера \ref{e:maps} для $n_2 = n_3 = n_4 = 0$.
    Сделаем
   
    $\bullet$ $|n_2|$ положительных/отрицательных <<пальцевых движений>> ломаной $f|_{34}$ вокруг точки $f(2)$, если $n_2$ положительно/отрицательно соответственно;
    
    $\bullet$ $|n_3|$ положительных/отрицательных <<пальцевых движений>> ломаной $f|_{41}$ вокруг точки $f(3)$, если $n_3$ положительно/отрицательно соответственно;
    
    $\bullet$ $|n_4|$ положительных/отрицательных <<пальцевых движений>> ломаной $f|_{12}$ вокруг точки $f(4)$, если $n_4$ положительно/отрицательно соответственно.
    
    Эти три преобразования можно сделать так, чтобы полученное в результате отображение было слабым почти вложением.
    Оно является искомым.
\end{proof}

Вышеприведенное построение можно коротко сформулировать так:
для каждого $j \in [4]$ и $j$-ого ориентированного ребра $\sigma_j$ из четверки $(23, 34, 41, 12)$ последовательно сделаем $|n_j|$ положительных/отрицательных <<пальцевых движений>>  ломаной $f|_{\sigma_j}$ вокруг точки $f(j)$, если $n_j$ положительно/отрицательно соответственно.

Мотивировка дальнейшего аналогична замечанию \ref{r:rigor}. 
}

\begin{proof}[Более явное построение примера \ref{e:maps}]
    В дальнейших равенствах цифры слева от знака равенства являются вершинами графа $K_4$, справа --- точками плоскости, изображенными на рисунке \ref{f:weak-emb}. 

    \begin{figure}[ht]\centering
    \compile{
    \includegraphics[scale=.9]{k4-weak-alm-emb.eps}
    }
    \caption{К построению примера \ref{e:maps}}
    \label{f:weak-emb}
    \end{figure}

    Положим $f(j) = j$ для каждого $j \in [4]$,  $f|_{13} = 13$, $f|_{24} = 24$ и
    $$f|_{12} = 1(22'2'')^{n_3}2, \quad f|_{23} = 2(33'3'')^{n_4}3, \quad f|_{34} = 3(44'4'')^{n_1}4, \quad f|_{41} = 4(11'1'')^{n_2}1.$$
    Докажем, что отображение $f$ является искомым.
    Имеем 
    $$w_f(1) = w_f(234, 1) = w'(f|_{23}, f(1)) + w'(f|_{34}, f(1)) + w'(f|_{42}, f(1)) = $$
    $$ = w'(2(33'3'')^{n_4}3, 1) 
    + w'(3(44'4'')^{n_1}4, 1) + w'(42, 1) = n_4w(33'3'', 1) + n_1w(44'4'', 1) + w(234, 1) = n_1.$$
    Для остальных чисел оборотов доказательства аналогичны.
    (Похожие вычисления уже встречались в построении примера \ref{e:k23}.)
\end{proof}

\aronly{
Приведем другое построение примера \ref{e:maps}, полезное для обобщений.

Для вершины $v$ и ориентированного ребра $\tau$ в графе $K$, таких что $f(v) \not\in f(\tau)$, положим
$$w'_f(\tau,v):=w'(f|_\tau,f(v)).$$

\begin{lemma}[ср. {\cite[Lemma 2.1]{KS20}}]\label{l:f-move}
    Для любых графа $K$, его ориентированного ребра $ab$, его вершины $j \not \in ab$, знака $s \in \{-1, +1\}$ и слабого почти вложения $g : K \to \R^2$ существует слабое почти вложение $f : K \to \R^2$, такое что
    
    (1) $w'_f(ab, j) = w'_g(ab, j)  + s$  и 
    
    (2) $w'_f(\sigma, v) = w'_g(\sigma, v)$ для любых ориентированного ребра $\sigma\ne ab,ba$ и вершины $v\ne j$, $v \not \in \sigma$. 
\end{lemma}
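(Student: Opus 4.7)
The plan is to construct $f$ from $g$ by a localized \emph{finger move}: I would modify only the polyline $g|_{ab}$ by inserting a closed detour that loops once around $g(j)$, leaving $f = g$ on every vertex and every other edge. This will change $w'_f(ab, j)$ by $s$ while keeping all other winding numbers untouched.

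First I would pick an interior point $P$ of some segment of $g|_{ab}$, with $P \ne g(j)$; this is possible because $g(j) \notin g|_{ab}$ by the weak-almost-embedding property of $g$. Let $X := \{g(v) : v \ne j, \ v \text{ a vertex of } K\}$, a finite set of points not containing $g(j)$. By general position I would draw a PL path $\gamma = P R_1 \ldots R_k S$ from $P$ to a point $S$ close to $g(j)$ that avoids $X$, and then take a small PL triangle $S S' S''$ enclosing $g(j)$ (in the component of $\R^2 \setminus (\gamma \cup X)$ containing $g(j)$), oriented counterclockwise for $s = +1$ and clockwise for $s = -1$, so small that no point of $X$ lies in its closed bounded region. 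If $P$ lies on the segment $Q_i Q_{i+1}$ of $g|_{ab} = Q_0 Q_1 \ldots Q_n$, define
\[
f|_{ab} := Q_0 \ldots Q_i\, P\, R_1 \ldots R_k\, S\, S'\, S''\, S\, R_k \ldots R_1\, P\, Q_{i+1} \ldots Q_n,
\]
and let $f$ coincide with $g$ on every vertex and on every edge other than $ab$.

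For the verification, condition (2) is immediate: since $f|_\sigma = g|_\sigma$ for every $\sigma \ne ab, ba$, we have $w'_f(\sigma, v) = w'_g(\sigma, v)$ for every $v \notin \sigma$, in particular for $v \ne j$. For (1), decompose $f|_{ab}$ into the head $Q_0 \ldots Q_i P$, the detour $D := P R_1 \ldots R_k S\, S'\, S''\, S\, R_k \ldots R_1 P$, and the tail $P Q_{i+1} \ldots Q_n$; the concatenation of head and tail equals $g|_{ab}$. By additivity of $w'$ along concatenation, $w'_f(ab, v) - w'_g(ab, v) = w'(D, g(v))$. The outgoing subpolyline $P R_1 \ldots R_k S$ and the return $S R_k \ldots R_1 P$ are reverses of each other, so their $w'$-contributions cancel by the identity $w'(l^{-1}) = -w'(l)$, leaving $w'(D, g(v)) = w'(S S' S'' S, g(v))$, which equals the closed winding number of the triangle $S S' S''$ around $g(v)$. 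By Proposition~\ref{p:winsim}.a this is $s$ for $v = j$ (inside the triangle, with the chosen orientation) and $0$ for $v \ne j$ (since every point of $X$ lies outside the triangle). Setting $v = j$ gives (1). Finally, $f$ is a weak almost-embedding: no vertex has moved, every edge other than $ab$ is unchanged, and $f|_{ab}$ avoids $X \cup \{g(j)\}$ because $\gamma$ avoids $X$, the triangle avoids $X \cup \{g(j)\}$ (it goes around $g(j)$, not through it), and the untouched segments of $g|_{ab}$ were already safe.

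The main technical point requiring care is the general-position choice of $\gamma$ and of the triangle $S S' S''$. Both are routine because $X \cup \{g(j)\}$ is a finite set of isolated points in the plane: any PL path from $P$ to a small neighborhood of $g(j)$ can be perturbed off $X$, and any sufficiently small triangle around $g(j)$ avoids $X$ and $\gamma$ while having $g(j)$ in its interior. No topological issue beyond the winding-number computation of Proposition~\ref{p:winsim}.a intervenes.
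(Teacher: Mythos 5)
Your proposal is correct and is essentially the paper's own argument: both perform a ``finger move'' on $g|_{ab}$, inserting an out-and-back excursion to a small triangle whose interior contains $g(j)$ and no other vertex image, so that the back-and-forth parts cancel by $w'(l^{-1})=-w'(l)$ and the triangle contributes $s$ to $w'(ab,j)$ and $0$ to every other winding number. The only (immaterial) differences are that the paper attaches the detour at the endpoint $A_m$ of the polyline via a single segment (imposing its condition (ii) on that segment) whereas you insert it at an interior point via a general-position path, and that you spell out the cancellation and the appeal to Proposition~\ref{p:winsim}.a in more detail.
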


\begin{proof}[Доказательство]
    Пусть имеется ломаная $l = A_1 \ldots A_m$ и треугольник с вершинами $X$, $Y$, $Z$, обозначенными против часовой стрелки.
    Точка $O$ лежит внутри треугольника $XYZ$.

   \begin{figure}[ht]\centering
   \compile{
        \includegraphics[scale=.9]{f-move.eps}
        }
        \caption{Положительное пальцевое движение ломаной $l$ вокруг точки $O$}
        \label{f:fingermove-strict}
    \end{figure}
    
    Будем говорить, что ломаная $l_+$ получена из ломаной $l$ \emph{положительным пальцевым движением} вокруг точки $O$, если $l_+ = A_1 \ldots A_m XYZX A_m$.
    Аналогично $l_-$ получена из ломаной $l$ \emph{отрицательным пальцевым движением} вокруг точки $O$, если $l_- = A_1 \ldots A_m XZYX A_m$.
    
    Легко видеть, что $w'(l) = w'(l_+) - 1 = w'(l_-) + 1$.

    \smallskip
    Сделаем пальцевое движение ломаной $g|_{ab}$ вокруг $g(j)$ так, что 
    
    (i) $g(j)$ единственный образ вершины графа внутри треугольника $XYZ$,

    (ii) отрезок $g(b)X$ не содержит образов вершин графа $K$, отличных от $a$ и $b$,

    (iii) движение положительное, если $s = +1$, иначе отрицательное.
    
    Отображение $f$, полученное в результате этого пальцевого движения, является искомым.
    Условия (i,~iii) влекут условия (1,~2).
    Отображение $f$ является слабым почти вложением ввиду условия (ii).
\end{proof}

\begin{proof}[Другое построение примера \ref{e:maps}]
    Квадрат с диагоналями образует слабое почти вложение $g:K_4\to\R^2$; пусть вершины $g(j),~ j \in [4]$ пронумерованы против часовой стрелки.
    Тогда $w_g(C_j,j) = 0$ для каждого $j \in [4]$.

    Многократным применением леммы \ref{l:f-move} к отображению $g$ получим слабое почти вложение $f : K_4 \to \R^2$, такое что
    $$\begin{array}{l}
        w'_f(23, 1) = w'_g(23, 1) + n_1, \quad w'_f(34, 2) = w'_g(34, 2) + n_2,\\
        w'_f(41, 3) = w'_g(41, 3) + n_3, \quad w'_f(12, 4) = w'_g(12 ,4) + n_4
    \end{array}\quad \text{ и }$$
    $$w'_f(\sigma, v) = w'_g(\sigma, v) \text{ для каждой пары } (\sigma, v),~ v \not \in \sigma, \text{ отличной от перечисленных}.$$
    Ввиду равенства $w_f(ijk,v)=w'_f(ij,v)+w'_f(jk,v)+w'_f(ki,v)$ для любого отображения $f$, цикла $ijk$ и вершины $v \not \in ijk$, отображение $f$ является искомым.
\end{proof}
}


\section{Оборотные числа почти вложений графа $K_4$}
\label{s:alem-k4}

\begin{example}
\label{e:k3} 
    (a) Для любого целого $n$ существует почти вложение $f:K_3 \sqcup\{4\}\to\R^2$, такое что $w_f(4)=n$.
        
    (b) Для любого целого $n$ существует почти вложение $f:K_4\to\R^2$, такое что $w_f(4)=n$. (См.~рисунок~\ref{f:123k4} для $n=3$.)

    (с) Для любого вложения $f:K_3\sqcup\{4\}\to\R^2$ имеем $w_f(4)\in\{-1,0,1\}$ (это утверждение близко к теореме Жордана).
\end{example}

\begin{figure}[ht]\centering
\compile{
    \includegraphics[scale=.16]{sol_alm_emb_k_4-45_1.eps}  
    }
    \caption{ Почти вложение $f:K_4\to\R^2$, такое что $w_f(4)=3$}\label{f:123k4}
\end{figure}

Для слабого почти вложения $f:K_4\to\R^2$ обозначим  
$$W_f:=\sum_{j=1}^4 (-1)^jw_f(j) = -w_f(234,1)+w_f(134,2)-w_f(124,3)+w_f(123,4).$$
Знак $(-1)^j$, не влияющий на четность, введен ради кососимметричности: по утверждению~\ref{p:autom} имеем $W_{f\circ\sigma} = \sgn\sigma \cdot W_f$ для любой перестановки $\sigma$ множества $[4]$.

\begin{theorem}\label{t:radonae} 
    Для любого почти вложения $f:K_4\to\R^2$ число $W_f$ нечетно. 
\end{theorem}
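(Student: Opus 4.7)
The plan is to compute $W_f\bmod 2$ as a parity of intersections of polylines, by applying Proposition~\ref{p:sign-stokes}.a together with Proposition~\ref{p:rel1} for a carefully placed basepoint, and then evaluating the resulting sum using the almost-embedding hypothesis.

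First I fix a point $O\in\R^2\setminus f(K_4)$ very close to $f(1)$, lying inside one of the three angular sectors at $f(1)$ cut out by the edges $f|_{12},f|_{13},f|_{14}$. For $j\in[4]$ I take a PL-path $p_j$ from $O$ to $f(j)$: the path $p_1=\overline{Of(1)}$ is a tiny segment, and for $j\in\{2,3,4\}$ the path $p_j$ goes from $O$ along a small detour around $f(1)$ (not through $f(1)$) to a point of $f|_{1j}$ close to $f(1)$, and then follows $f|_{1j}$ to $f(j)$, all in general position. Proposition~\ref{p:sign-stokes}.a applied to $(l,P_0,P_1,p)=(f|_{C_j},O,f(j),p_j)$ gives $w_f(C_j,j)=w(f|_{C_j},O)+f|_{C_j}\cdot p_j$; summing with signs $(-1)^j$ and invoking Proposition~\ref{p:rel1} to kill $\sum_j(-1)^jw(f|_{C_j},O)$, then reducing modulo~$2$, I obtain
$$W_f\equiv\sum_{j=1}^4|f(C_j)\cap p_j|\pmod 2.$$

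I split each term into a \emph{far} contribution (intersections lying outside a small disc $D$ around $f(1)$) and a \emph{near} contribution (inside $D$). The $j=1$ term equals $0$: $p_1$ stays inside $D$ and misses $f|_{C_1}$, since almost-embedding gives $f(1)\notin f|_{C_1}=f|_{234}$. For $j\in\{2,3,4\}$ the far contribution equals $|f|_{1j}\cap f(C_j)|$, in which the opposite-edge summand $|f|_{1j}\cap f|_{kl}|$ with $\{k,l\}=\{2,3,4\}\setminus\{j\}$ vanishes by the almost-embedding hypothesis; only the two adjacent summands $|f|_{1j}\cap f|_{1k}|$ and $|f|_{1j}\cap f|_{1l}|$ remain. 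Summed over $j\in\{2,3,4\}$, every term $|f|_{1i}\cap f|_{1j}|$ with $i\neq j$ in $\{2,3,4\}$ appears exactly twice, so the far contribution vanishes modulo~$2$, and everything reduces to the near part.

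The main step is the near-part computation. Write the cyclic order of the edges at $f(1)$ as $f|_{1a},f|_{1b},f|_{1c}$, and place $O$ in the sector bounded by $f|_{1a}$ and $f|_{1b}$. Inside $D$, the edges of $f(C_j)$ meeting $f(1)$ are the two edges at $f(1)$ other than $f|_{1j}$. For $j\in\{a,b\}$ the edge $f|_{1j}$ bounds the $O$-sector, so I choose the detour of $p_j$ to stay inside that sector, crossing neither of the two relevant edges and contributing~$0$. For $j=c$, however, any arc from the $O$-sector to a point on $f|_{1c}$ must cross exactly one of the bounding edges $f|_{1a}$ or $f|_{1b}$ of the $O$-sector, and these are precisely the two edges of $f(C_c)$ incident to $f(1)$; its contribution is $1\pmod 2$. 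Hence the near part totals $0+0+1\equiv 1\pmod 2$, and $W_f\equiv 1\pmod 2$. The hard part is this near-part analysis: one must verify that the forced parity~$1$ is independent of all auxiliary choices (which sector contains $O$, which direction each detour takes, even multiple windings). The almost-embedding hypothesis enters in two places: to guarantee $f(1)\notin f|_{C_1}$ so that $O$ can be placed close to $f(1)$ and still off $f(K_4)$, and to cancel the opposite-edge intersections in the far part; without these cancellations, $W_f$ could take any value (cf.\ Example~\ref{p:maps}).
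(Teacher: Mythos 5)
Your proof is correct in substance but follows a genuinely different route from the paper's. The paper (in \S\ref{s:alem}.A) perturbs $f$ to general position, uses Proposition~\ref{p:radon} to translate the parity of each $w_f(C_j,j)$ into «$f(j)$ lies in the mod-2 interior of $f|_{C_j}$», so that $W_f\equiv I(f)\pmod 2$, notes $V(f)=0$ for an almost embedding, and then imports the topological Radon theorem for the plane (the sum $V(h)+I(h)$ is odd for every general-position map $h:K_4\to\R^2$) as a cited black box. You instead give a self-contained computation: Proposition~\ref{p:stokes}.a (the mod-2 version suffices, since you reduce mod 2 at once) converts each $w_f(C_j,j)$ into a winding number about one auxiliary point $O$ plus an intersection count along a path $p_j$; Proposition~\ref{p:rel1} kills the winding numbers; the almost-embedding hypothesis and double counting kill the far intersections; and the residual $1$ comes out of the local picture of the three edge-germs at $f(1)$. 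In effect you prove directly (for almost embeddings) the special case of the topological Radon theorem that the paper only cites — that is what your approach buys, at the cost of the longer bookkeeping. It is closer in spirit to the paper's second sketch (Proposition~\ref{p:wu-conj}.a plus Proposition~\ref{pr:triod}), which also localizes the parity at a single vertex. Two points to tighten: perturb $f$ itself to general position first, so that adjacent edges meet in finitely many transversal points and the counts $|f|_{1i}\cap f|_{1j}|$ are defined, and take $p_j$ to be a generic push-off of $f|_{1j}$ rather than the edge itself so that Proposition~\ref{p:stokes}.a applies (the push-off creates no new intersections near $f(j)$ because $f(j)\notin f(C_j)$); and in the near part the precise statement is that $p_c$ crosses $f|_{1a}\cup f|_{1b}$ an \emph{odd} number of times, since these two germs together with $f(1)$ separate the small disc with $O$ and the endpoint of $p_c$ on opposite sides — you flag this, and this is exactly how the independence of the auxiliary choices is verified.
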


\begin{figure}[ht]\centering
    \includegraphics[scale=1]{k4-conj.eps}
\caption{(Е. Морозов) Почти вложение $f:K_4\to\R^2$, такое что $W_f=3\ne\pm 1$}
\label{f:k4-pm3}
\end{figure}

Аналог теоремы \ref{t:radonae} для вложений вместо почти вложений выглядит проще (он близок к теореме Жордана).
Более того, для любого вложения $f:K_4\to\R^2$ три из четырех чисел из теоремы \ref{t:radonae} равны нулю, а оставшееся равно $\pm1$. 
А если $f$ лишь почти вложение, то число $W_f$ может принимать значения, отличные от $\pm 1$ (см. рисунок \ref{f:k4-pm3}).
Аналог теоремы \ref{t:radonae} для слабых почти вложений вместо почти вложений неверен по утверждению \ref{e:maps}.
В отличие от утверждения \ref{p:rel1}, теорема \ref{t:radonae} не следует из <<соотношения $123+134+142+243=0$ в графе>>. 
Доказательства теоремы~\ref{t:radonae} приведены в \S\ref{s:alem}.A,~\S\ref{s:gawhap}.

\begin{theorem}[\cite{AM25}, ср. теорему \ref{t:radonae}]\label{t:k4} 
Для любых целых чисел  $n_1,n_2,n_3,n_4$, сумма которых нечетна, существует почти вложение  $f:K_4\to\R^2$, такое что $w_f(j)=n_j$ для каждого $j \in [4]$:
$$w_f(234, 1) = n_1,\quad w_f(134, 2) = n_2,\quad w_f(124, 3) = n_3,\quad w_f(123, 4) = n_4.$$
\end{theorem}

\section*{\ref{s:alem-k4}.A. К доказательству теоремы~\ref{t:k4}} 

Теорема \ref{t:k4} следует из двух лемм \cite[Lemma 1 и Lemma 2]{AM25}.
К доказательству \cite[Lemma 1]{AM25} подводит построение примера \ref{ex:Wf}.
Следующий пример является частным случаем теоремы \ref{t:k4}, но его построение проще и подводит к доказательству \cite[Lemma 2]{AM25}.

\begin{example}[ср. пример \ref{e:maps}]\label{e:radonae}
    Для любых целых $n_1,n_2,n_3,n_4$, таких что $\sum_{j=1}^4 (-1)^j n_j = \pm 1$  существует почти вложение $f:K_4\to\R^2$, такое что $w_f(j) = n_j$ для каждого $j \in [4]$.
\end{example}

\begin{figure}[ht]\centering
    \compile{
    \includegraphics[scale=.8]{fingermove5.eps}  
    }
    \caption{<<Пальцевые движения>> ломаной $f|_\tau$ вокруг отрезка $f(\sigma)$: положительное (слева), отрицательное (справа)}
    \label{f:finger-move} 
\end{figure}

\begin{proof}[Сведение примера \ref{e:radonae} к частному случаю]  
Достаточно построить почти вложение для $N := \sum_{j=1}^4 (-1)^j n_j = 1$.
Тогда нужное почти вложение существует и для $N=-1$, поскольку ввиду утверждения \ref{p:autom} имеем $W_{f\circ\sigma} = - W_f$ для любой нечетной перестановки $\sigma$ множества $[4]$. 
\end{proof}

\aronly{
\begin{proof}[Построение примера \ref{e:radonae} для частного случая $N = 1$.] 
    Правильный треугольник с его центром и рeбрами, соединяющими центр с вершинами, образуют отображение $f:K_4 \to \R^2$; пусть вершины $f(1),f(2),f(3)$ пронумерованы против часовой стрелки. Тогда $w_f(C_j, j) = 0$ для каждого $j \in [3]$, и $w_f(C_4, 4) = 1$.  
    Сделаем 

    $\bullet$ $|n_1|$ положительных/отрицательных <<пальцевых движений>> ломаной $f|_{23}$ вокруг отрезка $f(14)$ (рисунок~\ref{f:finger-move}, ср. рисунок~\ref{f:fingermove}), если $n_1$ положительно/отрицательно соответственно; 
    
    $\bullet$ $|n_2|$ положительных/отрицательных <<пальцевых движений>> ломаной $f|_{13}$ вокруг отрезка $f(24)$, если $n_2$ положительно/отрицательно соответственно; 
    
    $\bullet$ $|n_3|$ положительных/отрицательных <<пальцевых движений>> ломаной $f|_{12}$ вокруг отрезка $f(34)$, если $n_3$ положительно/отрицательно соответственно; полученное отображение является искомым.
\phantom\qedhere
\end{proof}
}

\begin{figure}[ht]\centering
\compile{
    \includegraphics[scale=.9]{k4-alm-emb.eps}
    }
    \caption{К построению примера \ref{e:radonae}}
    \label{f:alm-emb}
\end{figure}

Следующее построение тоже основано на аналоге пальцевых движений для почти вложений (см. рисунок \ref{f:finger-move}; ср. рисунок \ref{f:fingermove}), но явно их не использует (см. замечание \ref{r:rigor}).  

\begin{proof}[Более явное построение примера \ref{e:radonae} для $N = 1$]
    В дальнейших равенствах цифры слева от знака равенства являются вершинами графа $K_4$, справа --- точками плоскости, изображенными на рисунке \ref{f:alm-emb}.
    Положим $f(j) = j$ для каждого $j \in [4]$,  $f|_{j4} = j4$ для каждого $j \in [3]$ и
    $$f|_{12} = 1(3'3''3''')^{n_3}3'2, \quad f|_{23} = 2(1'1''1''')^{n_1}1'3, \quad f|_{13} = 1(2'2''2''')^{n_2}2'3.$$
    Докажем, что отображение $f$ является искомым.
    Имеем 
    $$w_f(1) = w_f(234, 1) = w'(f|_{23}, f(1)) + w'(f|_{34}, f(1)) + w'(f|_{42}, f(1)) = $$
    $$ = w'(2(1'1''1''')^{n_1}1'3, 1) + w'(34, 1) + w'(42, 1) = 
    n_1w(1'1''1''', 1) + w(21'34, 1) = n_1.$$
    Для остальных чисел оборотов доказательства аналогичны.
    (Похожие вычисления уже встречались в построениях примеров \ref{e:k23} и \ref{e:maps}.)
\end{proof}

\section{Оборотные числа почти вложений графов} 
\label{s:alem}

Для почти вложений в плоскость тех графов, которые содержат подграф, изоморфный $K_4$, также справедливы соотношения, получающиеся из теоремы \ref{t:radonae}.
Например, в графе $K_5 - 45$ ровно два таких подграфа.


Для почти вложения $f : K_5 - 45 \to \R^2$ обозначим $W_f := W_{g}$, где $g$ "--- сужение отображения $f$ на $K_4$-подграф, содержащий вершину $4$.

\begin{proposition}
\label{p:k4-k5}
    Для любого почти вложения $f:K_5-45\to\R^2$ верно 
    $$W_{f} =  w_f(123,4)-w_f(123,5).$$ 
\end{proposition}


\begin{proof}
Утверждение
следует из следующей цепочки равенств 
$$w_f(123,5) \stackrel{(1)}{=} w_f(234,5) - w_f(134,5) + w_f(124,5) \stackrel{(2)}{=} w_f(234,1) - w_f(134,2) + w_f(124,3),$$
где равенство (1) верно по утверждению \ref{p:k23}.
В следующем абзаце докажем равенство (2).

Поскольку $f$ --- почти вложение, то $f(51)$ не пересекает $f(234)$.
Следовательно, $w_f(234, 5) = w_f(234, 1)$ ввиду утверждения \ref{p:rel}.  
Аналогично, $w_f(134, 5) = w_f(134, 2)$ и  $w_f(124, 5) = w_f(124, 3)$.
\end{proof}

\begin{remark} 
\label{r:k5-e}
Рассмотрим произвольное почти вложение $f:K_5-45\to\R^2$.

Соотношение из утверждения~\ref{p:k4-k5} можно переписать в следующем виде
$$
w_f(123,5) = w_f(234,1) - w_f(134,2) + w_f(124,3).
$$
Аналогично 
$$
w_f(123,4) = w_f(235,1) - w_f(135,2) + w_f(125,3).
$$
Это следует из утверждения~\ref{p:k4-k5} для почти вложения $f\circ\sigma$, где $\sigma$ "--- автоморфизм графа $K_5-45$, переставляющий вершины 4 и 5 местами. 
\end{remark}

\begin{theorem}[ср. теоремы \ref{t:radonae} и \ref{t:k4}]\label{t:k5-e} 
    Для любого почти вложения $f:K_5-45\to\R^2$ разность $w_f(123,4)-w_f(123,5)$ 
    
    (a) нечетна; 
    
    (b) равна $\pm1$. 
\end{theorem}

Теорема \ref{t:k5-e}.a следует из утверждения~\ref{p:k4-k5} и теоремы~\ref{t:radonae}.
Прямое доказательство теоремы~\ref{t:k5-e}.a приведено в \S\ref{s:alem}.A.
Теорема~\ref{t:k5-e}.b доказана в \cite{Ga23}. 

Аналог теоремы~\ref{t:k5-e}.b для вложений вместо почти вложений выглядит более просто (он близок к теореме Жордана).

\begin{figure}[ht]\centering
\compile{
\includegraphics[scale=0.5]{almembk5n3.eps} 
}
\caption{Почти вложение $f:K_5-45\to\R^2$, такое что $w_f(123, 5) = 3$} 
\label{f:k5e}
\end{figure}

\begin{example} 
\label{e:k5-map}
    (a) Аналог теоремы \ref{t:k5-e}.a для слабых почти вложений вместо почти вложений неверен.
    Контрпримером является отображение, переводящее вершины графа в вершины правильного пятиугольника, а его ребра --- в отрезки.
    Для этого примера \linebreak $w_f(123, 4)-w_f(123, 5) = 0$.
    
    (b) Для любого целого $n$ существует почти вложение $f:K_5-45\to\R^2$, такое что $w_f(123,5)=n$. 
    (Для $n=2$ см. рисунок \ref{k5} с той же нумерацией, что на рисунке \ref{f:k5e}. 
    Для $n=3$ см. рисунок \ref{f:k5e}.)
\end{example}


\begin{figure}[ht]\centering
     \includegraphics[width=4cm]{subdivision.eps}
     \caption{Подразделение ребра}\label{f:subdivision}
\end{figure}

Граф $K_{3,3} - 33'$ изоморфен графу, полученному из графа $K_4$ подразделением двух несмежных ребер (см. рисунок \ref{f:subdivision} и ср. рисунки \ref{f:k4-pm3} и \ref{f:k33-conj}).
Отождествим вершины $1, 2, 3, 4$ графа, полученного подразделением двух ребер $13$ и $24$ графа $K_4$, с вершинами $1, 1', 2, 2'$ графа $K_{3,3} - 33'$.
Назовем отображение $f : K_{3,3}-33' \to \R^2$ \textit{подразделением} отображения $g : K_4 \to \R^2$, если для $g|_{24} = A_1\ldots A_n$ и $g|_{13} = B_1 \ldots B_m$ выполнено
$$f(3) \in A_{i}A_{i+1} \text{ и } f(3') \in B_{j}B_{j+1} \text{ для некоторых } i \in [n-1] \text{ и } j \in [m-1];$$
$$f|_{1'3} = A_1 \ldots A_if(3), 
\ f|_{32'} = f(3)A_{i+1}\ldots A_n, 
\ f|_{13'} = B_1 \ldots B_jf(3'),  
\ f|_{3'2} = f(3')B_{j+1} \ldots B_m$$
и $f$ совпадает с $g$ на остальных вершинах и ребрах.

Нетрудно проверить, что если почти вложение $f : K_{3,3}-33' \to \R^2$ является подразделением отображения $g : K_4 \to \R^2$, то $g$ "--- почти вложение.






\begin{proposition}[ср. утверждение \ref{p:k4-k5}]
\label{p:k4-k33}
    Для почти вложения $f: K_{3,3} - 33' \to \R^2$ возьмем  почти вложение $g : K_4 \to \R^2$, подразделением которого $f$ является.
    Тогда $w_f(11'22', 3) - w_f(11'22', 3') = W_g$.
\end{proposition}

\begin{proof}[Доказательство (аналогично доказательству утверждения~\ref{p:k4-k5})]
	Напомним про отождествление вершин. Поэтому
    $$W_g = - w_g(1'22', 1) + w_g(122', 1') - w_g(11'2', 2) + w_g(11'2, 2') .$$
    Достаточно доказать, что 
    $$w_f(11'22', 3) = w_g(122', 1') + w_g(11'2, 2') \quad \text{и} \quad w_f(11'22', 3') = w_g(1'22', 1) + w_g(11'2', 2).$$
    Докажем первое из этих равенств, второе доказывается аналогично.
    Имеем
    $$w_f(11'22', 3) \stackrel{(1)}{=} w_f(11'23', 3) + w_f(13'22', 3) \stackrel{(2)}{=} w(g|_{11'2}, f(3)) + w(g|_{122'}, f(3)) \stackrel{(3)}{=} $$
    $$\stackrel{(3)}{=} w_g(11'2, 2') + w_g(122', 1'), \text{ где}$$
    
    $\bullet$ равенство $(1)$ верно по утверждению \ref{p:k23}; 
    
    $\bullet$ равенство (2) --- поскольку $f|_{11'23'} = g|_{11'2}$ и $f|_{13'22'} = g|_{122'}$.

    Докажем равенство (3). 
    Поскольку $f$ --- почти вложение, то $f(32')$ не пересекает $g(11'2)$.
    Следовательно, $w(g|_{11'2}, f(3)) = w_g(11'2, 2')$ ввиду утверждения \ref{p:rel}.\\
    Аналогично, $w(g|_{122'}, f(3)) = w_g(122', 1')$.
\end{proof}

\begin{theorem}[ср. теоремы \ref{t:radonae} и \ref{t:k5-e}.a]
\label{t:k33-odd} 
    Для любого почти вложения $f:K_{3, 3}-33'\to\R^2$ разность $w_f(11'22',3)-w_f(11'22',3')$ нечетна.
\end{theorem}

Теорема \ref{t:k33-odd} следует из утверждения~\ref{p:k4-k33} и теоремы~\ref{t:radonae}.
Прямое доказательство теоремы~\ref{t:k33-odd} аналогично доказательству теоремы \ref{t:k5-e}.a из \S\ref{s:alem}.A.

Аналог теоремы \ref{t:k33-odd} для вложений вместо почти вложений заключается в том, что $w_f(11'22',3)-w_f(11'22',3') = \pm 1$.
Это утверждение верно и близко к теореме Жордана.

\begin{figure}[ht]\centering
\includegraphics[scale=1]{k33-conj.eps}   
\caption{Почти вложение $f: K_{3,3} - 33' \to \R^2$, такое что \\ $w_f(11'22', 3) - w_f(11'22', 3') = \pm 3 \neq \pm1$; внутренность по модулю 2 ломаной $f|_{11'22'}$ заштрихована} 
\label{f:k33-conj}
\end{figure}

\begin{example}[ср. теоремы~\ref{t:k4} и~\ref{t:k5-e}.b]
\label{e:k33-strong}
Для любых целых чисел $n_1, n_2$ разной четности существует почти вложение $f:K_{3, 3}-33'\to\R^2$, такое что $w_f(11'22',3) = n_1$ и $w_f(11'22',3') = n_2$.
\end{example}

Построение примера~\ref{e:k33-strong} основано на утверждении~\ref{p:k4-k33} и идеях доказательства теоремы~\ref{t:k4}, и приведено в~\S\ref{s:alem}.B.

\begin{remark}
\label{r:int_analogs}
Аналоги теорем \ref{t:radonae}, \ref{t:k5-e}.a, \ref{t:k33-odd} верны для $\Z_2$-вложений (см. замечание \ref{r:zz2}). 
Гипотеза: аналог теоремы \ref{t:k5-e}.b верен для $\Z$-вложений, но неверен для $\Z_2$-вложений.
\end{remark}

\section*{\ref{s:alem}.A. 
Доказательства теорем \ref{t:radonae} и \ref{t:k5-e}.a} 

Несколько точек находятся в \emph{общем положении}, если никакие три из них не лежат на прямой и никакие три отрезка, их соединяющие, не имеют общей внутренней точки.

\begin{proposition}\label{p:stokes} 
    Возьмем замкнутую и незамкнутую ломаные $l$ и $p$, чьи вершины попарно различны и находятся в общем положении.
    Обозначим через $P_0$ и $P_1$ начальную и конечную точки ломаной $p$.
    Тогда $w(l,P_1)-w(l,P_0) \equiv |l\cap p| \mod2$.
    
\end{proposition}

Это классическое утверждение является дискретной версией \emph{теоремы Стокса}.
Его доказательство см. в \cite[Утверждение 2.2.a]{ABM+}.
 
Пусть $v_1, \ldots, v_n$ --- вершины графа $K$, а $e_1, \ldots, e_m$ --- его ребра.
Отображение $f:K\to\R^2$ называется отображением \emph{общего положения}, если
точки упорядоченного набора $\left(f(v_1), \ldots, f(v_n), A^{e_1}_1, \ldots, A_{l_{e_1}}^{e_1}, \ldots, A_1^{e_m}, \ldots, A_{l_{e_m}}^{e_m}\right)$ находятся в общем положении, где $A_1^e, \ldots, A_{l_e}^{e}$ --- все неконцевые вершины сужения $f|_e$ на ребро $e$.

Пусть $f:K\to\R^2$ --- отображение общего положения.
Тогда образы любых двух несмежных ребер пересекаются в конечном числе точек.
Обозначим через $V(f)$ количество точек пересечения образов несмежных ребер.

\begin{proof}[Сведение теоремы \ref{t:k5-e}.a к теореме \ref{t:hatuvkfl}] 
    Можно считать, что $f$ является отображением общего положения.
    Иначе заменим его <<малым шевелением>> на почти вложение $g: K_5-45 \to \R^2$ общего положения, такое что $w_g(123, 4) = w_f(123, 4)$ и $w_g(123, 5) = w_f(123, 5)$.
    
    Рассмотрим отображение $h: K_5 \to \R^2$ общего положения, такое что $h|_{K_5-45} = f$. Теорема \ref{t:k5-e}.a вытекает из следующей цепочки равенств и сравнений по модулю 2: 
    $$w_f(123, 4) - w_f(123, 5) = w_h(123, 4) - w_h(123, 5) \stackrel{(1)}{\equiv} |h(45) \cap h(123)| \stackrel{(2)}{=} V(h) \stackrel{(3)}{\equiv} 1.$$
    Здесь сравнение (1) верно ввиду утверждения \ref{p:stokes}, равенство (2) --- поскольку $f$ является почти вложением, а сравнение (3) является частным случаем следующей теоремы.
\end{proof}

\begin{theorem}[Ханани--Татт; ван Кампен--Флорес]
\label{t:hatuvkfl}
    Пусть граф $K$ --- это $K_5$ или $K_{3,3}$.
    Для любого отображения $h:K\to\R^2$ общего положения число $V(h)$ нечетно.
\end{theorem}

См. доказательство в обзоре {\cite[Lemma 1.4.3, Remark 1.4.4.a]{Sk18}}.

Теорема~\ref{t:k5-e}.b является \emph{целочисленной версией для почти вложений} случая $K = K_5$ теоремы \ref{t:hatuvkfl}. 
Заметим, что теорема \ref{t:hatuvkfl} не имеет \emph{целочисленной версии для отображений} (это известно и объяснено в \cite[Remark 5.b]{Ga23}).

\textit{Внутренностью по модулю 2} замкнутой ломаной $l$, чьи вершины попарно различны и находятся в общем положении, называется множество всех точек $X\in\R^2-l$, для которых найдется ломаная $p$, 

$\bullet$ соединяющая точку $X$ с точкой, расположенной достаточно далеко от $l$ (вне выпуклой оболочки вершин ломаной $l$), 

$\bullet$ пересекающая $l$ в нечетном числе точек, и 

$\bullet$ такая что все вершины ломаных $l$ и $p$ попарно различны и (кроме $X$) находятся в общем положении. 

Корректность этого определения следует из классической леммы о четности: \emph{если вершины двух замкнутых ломаных попарно различны и находятся в общем положении, то ломаные пересекаются в четном числе точек}. (См. доказательство в \cite[лемма 1.3.3]{Sk18}; эта лемма также есть частный случай утверждения \ref{p:stokes} для $P_0 = P_1$.)

\begin{proposition}\label{p:radon}
Пусть вершины ломаной $l$ попарно различны и находятся в общем положении.
Точка $O$ лежит во внутренности по модулю 2 замкнутой ломаной $l$ тогда и только тогда, когда $w(l)$ нечетно.
\end{proposition}

Для отображения $f : K_4 \to \R^2$ общего положения обозначим через $I(f)$ количество тех вершин $j$, образы которых лежат во внутренности по модулю 2 замкнутой ломаной $f|_{C_j}$.

\begin{theorem}[топологическая теорема Радона для плоскости]
\label{t:radon}
Для любого отображения $h:K_4\to\R^2$ общего положения сумма $V(h)+I(h)$ нечетна.
\end{theorem}

Эту теорему в эквивалентном виде доказали Эрвин~Г.~Баймочи и Имре~Барань в \cite{BB}.
См. доказательство в обзоре {\cite[Lemma 2.2.3]{Sk18}}.

 \begin{proof}[Сведение теоремы \ref{t:radonae} к теореме \ref{t:radon}] 
Можно считать, что $f$ является отображением общего положения.
Иначе заменим его <<малым шевелением>> на почти вложение $g: K_4 \to \R^2$ общего положения, такое что $w_g(j) = w_f(j)$ для каждого $j \in [4]$.

Теперь ввиду утверждения \ref{p:radon} имеем $W_f \equiv I(f) \pmod 2$.
Так как $f$ является почти вложением, то $V(f)=0$.
Тогда теорема~\ref{t:radonae} вытекает из  теоремы \ref{t:radon}.
\end{proof}

\section*{\ref{s:alem}.B. 
Построение примера \ref{e:k33-strong}} 

Следующий пример \ref{e:k33-weak} является частным случаем примера \ref{e:k33-strong}, но его построение проще и подводит к построению общего примера.
Пример \ref{e:k33-weak} первым построил А. Лазарев (не опубликовано).

\begin{example}
\label{e:k33-weak}
Для любого нечетного $n$ существует почти вложение $f:K_{3, 3}-33'\to\R^2$, такое что $w_f(11'22',3)-w_f(11'22',3')= n$. 
(См. рисунок \ref{f:k33-conj} для $n = \pm3$.)
\end{example}

Пример~\ref{e:k33-weak} является следствием утверждения~\ref{p:k4-k33} и следующего примера.

\begin{example}\label{ex:Wf}
	Для любого нечетного $n$ существуют почти вложения $g: K_4 \to \R^2$ и $f: K_{3,3} - 33' \to \R^2$, такие что $W_g = n$ и $f$ является подразделением отображения $g$.
\end{example}

\begin{figure}[ht]\centering
    \includegraphics[scale=0.6]{figs/k4-k33.eps}  
    \caption{К построению примера \ref{ex:Wf} (ср. рисунок \ref{f:k4-pm3})}\label{f:k4-k33}
\end{figure}

\begin{proof}[Построение примера \ref{ex:Wf}]


Возьмем правильный треугольник $ABC$ с центром $O$, вершины обозначены против часовой стрелки.
Построим почти вложение $g: K_4\to \R^2$ так (см. рисунок \ref{f:k4-k33}):

$\bullet$ $g$ отображает вершины $1,2,3,4$ графа $K_4$ в точки $A, B, C, O$ соответственно;

$\bullet$ сужения $A\ldots O$ и $B\ldots C$ взяты из \cite[Proof of Lemma 3]{AM25} с параметрами $\varepsilon_2 < |BO|/2$ и $m = (1 - n)/2$;

$\bullet$ сужения на остальные ребра --- отрезки.

В качестве $f$ возьмем почти вложение, полученное из $g$ подразделением отрезков $BO$ и $AC$ в серединах.
\footnote{Построение примера \ref{e:k33-weak} можно завершить уже здесь без применения утверждения~\ref{p:k4-k33}.
Вместо следующей проверки можно аналогично проверить, что разность чисел оборотов замкнутой ломаной $g|_{1234}$ вокруг середин отрезков $BO$ и $AC$ равна $n$.}
В следующем абзаце проверим, что $W_g = n$. 

По \cite[Proof of Lemma 3]{AM25} имеем
$$w_g(1) = w'(B\ldots C, A) + w'(CO, A) + w'(OB, A) = \frac{1}{6} - \frac{1}{12} - \frac{1}{12} = 0;$$
$$w_g(2) = w'(AC, B) + w'(CO, B) + w'(O \ldots A, B) = -\frac{1}{6} + \frac{1}{12} + \bigg(\frac{1}{12} - m\bigg) = \frac{n - 1}2;$$
$$w_g(3) = w'(AB, C) + w'(BO, C) + w'(O\ldots A, C) = \frac{1}{6} - \frac{1}{12} - \frac{1}{12} = 0;$$
$$w_g(4) = w'(AB, O) + w'(B\ldots C, O) + w'(CA, O) = \frac{1}{3} + \bigg(\frac{1}{3} - m\bigg) + \frac{1}{3} = 1 + \frac{n-1}2.$$
Откуда 
$$W_g = -w_g(1) + w_g(2) - w_g(3) + w_g(4) = -0 + \frac{n-1}2 - 0 + \bigg(1 + \frac{n-1}2\bigg) = n.$$ 
\end{proof}

\begin{proof}[Построение примера~\ref{e:k33-strong}]
Возьмем почти вложение $h : K_{3,3} - 33' \to \R^2$ из построения примера \ref{ex:Wf} для $n = n_1 - n_2$.
Тогда $w_h(11'22', 3) = n_1 - n_2$ и $w_h(11'22', 3') = 0$.

Модифицируем $h$, сделав $n_2$ положительных <<пальцевых движений>> отрезка $AB$ вокруг ломаной $UOCV$, где $U := h(3)$ и $V := h(3')$ --- середины отрезков $BO$ и $AC$ соответственно (см. рисунок \ref{f:finger-move}).
Получим новое почти вложение $f:K_{3,3} - 33' \to \R^2$, для которого $w_f(11'22', 3) = n_1$ и $w_f(11'22', 3') = n_2$.
\end{proof}

\section{Проблема описания значений оборотных чисел}\label{s:problem}

Говоря нестрого, для почти вложения $f:K\to\R^2$ рассмотрим набор оборотных чисел $w_f(C,v)$ для всевозможных пар $(C, v)$, образованных вершиной $v$ графа $K$ и ориентированным простым циклом $C$ в $K-v$.
Как описать наборы, реализуемые почти вложениями $f:K\to\R^2$?

Строго говоря, рассмотрим множество $H_K$ всевозможных пар $(C, v)$, образованных вершиной $v$ графа $K$ и простым циклом $C$ в $K-v$.
Для каждой пары $(C, v) \in H_K$ выберем ориентацию на цикле $C$ и далее считаем циклы $C$ ориентированными.
(Если цикл $C'$ получается из $C$ обращением ориентации, то $w_f(C', v) = -w_f(C, v)$.)
Почти вложение $f : K \to \R^2$ задает отображение $w_f :H_K \to \Z$ формулой $(C, v) \mapsto w_f(C, v)$.


\begin{problem}\label{pm:wind}
Для данного плоского графа $K$ опишите отображения $H_K \to \Z$, равные $w_f$ для некоторого почти вложения $f:K\to\R^2$.



Возможно, это описание не имеет красивой формулировки. 
Тогда интересен алгоритм, который по отображению $H_K \to \Z$ выясняет, равно ли оно $w_f$ для некоторого почти вложения $f:K\to\R^2$.
\end{problem}

Для аналогичной проблемы с заменой почти вложений на \emph{вложения} имеется неявное описание и, скорее всего, не имеется явного (см. объяснение в замечании~\ref{r:embeddings}).

См. аналогичную проблему~\ref{pm:Wu} с добавлением других инвариантов.

Решение проблемы \ref{pm:wind} для $K = K_4$ дается теоремами \ref{t:radonae} и \ref{t:k4}. 
Если $K$ --- лес или цикл, то проблема тривиальна, поскольку для этих графов $H_K = \varnothing$.
Если $K$ --- собственный подграф графа $K_4$, то никаких ограничений на реализуемые отображения $H_K \to \Z$ нет, см. пример \ref{e:k3}.a и следующий пример.

\begin{example}\label{p:k4-34}
    (a) Для любого целого $n$ существует почти вложение $f:K_4 - 24 - 34 \to \R^2$, такое что $w_f(4) = n$.

    (b) Для любых целых $n, m$ существует почти вложение $f: K_4 - 34 \to \R^2$, такое что $w_f(3) = n$ и $w_f(4) = m$.
    (См. рисунок \ref{f:k4-34} для $n = 2$ и $m = 2$.)
\end{example}

\begin{figure}[ht]\centering
    \compile{
    \includegraphics[scale=0.75]{k4-34.eps}  
    }
    \caption{Почти вложение $f:K_4-34\to\R^2$, такое что $w_f(3)=2$ и $w_f(4)=2$}\label{f:k4-34}
\end{figure}

Проблема \ref{pm:wind} открыта даже когда $K$ есть $K_5-45$ (см. следующую гипотезу), граф куба, граф октаэдра.

\begin{conjecture}
\label{c:k5-e}
    Для любых целых чисел  $n_{41},n_{42},n_{43}$ и $n_{51},n_{52},n_{53}$, таких что
    разность     \linebreak
    $(n_{41} - n_{42} + n_{43}) - (n_{51} - n_{52} + n_{53})$ равна $\pm1$ (ср. замечание~\ref{r:k5-e}, теорему~\ref{t:k5-e}.b),
    существует почти вложение  $f:K_5-45\to\R^2$, такое что 
    $$w_f(234, 1) = n_{41},\quad w_f(134, 2) = n_{42},\quad w_f(124, 3) = n_{43}
    ,$$
    $$w_f(235, 1) = n_{51},\quad w_f(135, 2) = n_{52},\quad w_f(125, 3) = n_{53}
    .$$
\end{conjecture}

\section{Циклическое и триодическое числа: определения и обсуждение}\label{s:gawh}

Определим другие инварианты, и интересные сами по себе, и полезные (см.~\S\ref{s:gawhap}) для изучения оборотных чисел.

Назовем тройку ломаных $l_1,l_2,l_3$, соединяющих $A_1$ с $A_2$, $A_2$ с $A_3$, $A_3$ с $A_1$ соответственно (и, таким образом, образующих замкнутую ломаную), \textbf{циклической}, если $A_i\not\in l_{i+1}$ для каждого $i=1,2,3$ (нумерация по модулю 3; другими словами, если ломаные образуют почти вложение графа $K_3$ в плоскость).

\begin{figure}[ht] \centering
\compile{
    \includegraphics[scale=0.22]{figs/standart_triod.eps}
    \qquad
    \includegraphics[scale=0.26]{standard_triangle.eps}
    }
    \caption{Триод и треугольник}
    \label{f:triod}
\end{figure}

\textbf{Циклическое число} $\cy(l_1,l_2,l_3)$ циклической тройки ломаных $l_1, l_2, l_3$  определяется как число оборотов вектора в результате следующих вращений:  

$\bullet$ от вектора $\overrightarrow{A_1A_2}$ к вектору $\overrightarrow{A_1A_3}$, при котором начало вектора неподвижно, а конец движется по ломаной $l_2$, затем

$\bullet$ от вектора $\overrightarrow{A_1A_3}$ к вектору $\overrightarrow{A_2A_3}$, при котором конец вектора неподвижен, а начало движется по $l_1$, затем

$\bullet$ от вектора $\overrightarrow{A_2A_3}$ к вектору $\overrightarrow{A_2A_1}$, при котором начало вектора неподвижно, а конец движется по $l_3$, затем

$\bullet$ от вектора $\overrightarrow{A_2A_1}$ к вектору $\overrightarrow{A_3A_1}$, затем

$\bullet$ от вектора $\overrightarrow{A_3A_1}$ к вектору $\overrightarrow{A_3A_2}$, затем

$\bullet$ от вектора $\overrightarrow{A_3A_2}$ к вектору $\overrightarrow{A_1A_2}$.

Это равно удвоенному (нецелому) числу оборотов в результате первых трех из вышеприведенных вращений.
Строго говоря,
$$\cy(l_1,l_2,l_3) := w'(l_2, A_1)+ w'(l_1, A_3)+ w'(l_3, A_2)+w'(l_2, A_1)+ w'(l_1, A_3)+w'(l_3, A_2)=$$ 
$$=2\left(w'(l_2, A_1)+w'(l_1, A_3)+w'(l_3, A_2)\right). \qquad(\text{cy})$$

\begin{example}\label{e:cy}
    (a) Циклическое число циклической тройки ломаных, образующих треугольник, равно $\pm1$ (рисунок \ref{f:triod}, справа).

     (b) Циклическое число циклической тройки ломаных, образующих замкнутую несамопересекающуюся ломаную, равно $\pm1$ (это утверждение близко к теореме Жордана). 
\end{example}

\begin{proposition}\label{pr:off}
     Циклическое число любой циклической тройки ломаных нечетно. 
\end{proposition}

\begin{proof}
Обозначим через $l_1, l_2, l_3$ ломаные, образующие циклическую тройку, и соединяющие $A_1$ с $A_2$, $A_2$ с $A_3$, $A_3$ с $A_1$ соответственно.
Имеем 
$$\cy(l_1, l_2, l_3)\stackrel{(1)}{=}
2\left( \frac{\angle A_2A_1A_3}{2\pi}+k_1+ \frac{\angle A_3A_2A_1}{2\pi}+k_2+ \frac{\angle A_1A_3A_2}{2\pi}+k_3 \right)=$$
$$=2(k_1+k_2+k_3)+2\frac{\angle A_2A_1A_3+\angle A_3A_2A_1+\angle A_1A_3A_2}{2\pi}=2(k_1+k_2+k_3)+1$$
для некоторых целых $k_1, k_2, k_3$.
Здесь равенство (1) следует из определения (cy) и из утверждения~\ref{p:w'}.
\end{proof}

\begin{example}\label{p:off} 
     Для любого $n$ существует циклическая тройка ломаных, циклическое число которой равно $2n+1$. (См. рисунок \ref{f:triod5}, справа, для $n = 2$.)
\end{example} 

\begin{proof}[Построение]
Рассмотрим правильный треугольник $A_1A_2A_3$, вершины которого пронумерованы против часовой стрелки. 
Обозначим через $l_2', l_3, l_1$ ломаные $A_2A_3$, $A_3A_1$, $A_1A_2$ соответственно.
Обозначим через $l_2$ ломаную, полученную в результате $|n|$ положительных/отрицательных пальцевых движений (рисунок~\ref{f:fingermove}) ломаной $l_2'$ вокруг вершины $A_1$ для положительного/отрицательного $n$ соответственно.
Тогда 
$$\cy(l_1, l_2, l_3) = 
2\left(\frac{\angle A_2A_1A_3}{2\pi}+n+\frac{\angle A_3A_2A_1}{2\pi}+\frac{\angle A_1A_3A_2}{2\pi}\right) = 2n+1.$$
\end{proof}

\begin{figure}[ht] \centering
\compile{
\includegraphics[scale=0.25]{triod5.eps}
\qquad
\includegraphics[scale=0.3]{off5.eps}
}
\caption{Тройка ломаных, триодическое/циклическое (слева/справа) число которой равно $5$}
\label{f:triod5}
\end{figure}

Назовем тройку ломаных $l_1,l_2,l_3$, соединяющих точку $O$ с точками $A_1,A_2,A_3$ соответственно, \textbf{триодической}, если $A_i \not\in l_j$ для всех $i\neq j$.
(Другими словами, если ломаные образуют почти вложение графа $K_{3, 1}$ в плоскость.)

\textbf{Триодическое число} $\tr(l_1,l_2,l_3)$ триодической тройки ломаных $l_1, l_2, l_3$ определяется как число оборотов вектора в результате следующих вращений: 

$\bullet$ от вектора $\overrightarrow{A_1A_2}$ к вектору $\overrightarrow{A_1A_3}$, при котором начало вектора неподвижно, а конец движется по ломаной $l_2^{-1} l_3$, затем

$\bullet$ от вектора $\overrightarrow{A_1A_3}$ к вектору $\overrightarrow{A_2A_3}$, при котором конец вектора неподвижен, а начало движется по $l_1^{-1}l_2$, затем

$\bullet$ от вектора $\overrightarrow{A_2A_3}$ к вектору $\overrightarrow{A_2A_1}$, при котором начало вектора неподвижно, а конец движется по $l_3^{-1} l_1$, затем

$\bullet$ от вектора $\overrightarrow{A_2A_1}$ к вектору $\overrightarrow{A_3A_1}$ по $l_2^{-1} l_3$, затем

$\bullet$ от вектора $\overrightarrow{A_3A_1}$ к вектору $\overrightarrow{A_3A_2}$ по $l_1^{-1} l_2$, затем

$\bullet$ от вектора $\overrightarrow{A_3A_2}$ к вектору $\overrightarrow{A_1A_2}$ по $l_3^{-1} l_1$.

Это равно удвоенному (нецелому) числу оборотов в результате первых трех из вышеприведенных вращений. 
Строго говоря, $\tr(l_1, l_2, l_3)$ определяется следующей формулой, аналогичной (cy): 
$$\tr(l_1,l_2,l_3) := 
  w'(l_2^{-1}l_3, A_1) + w'(l_1^{-1}l_2, A_3) + w'(l_3^{-1}l_1,A_2) + 
w'(l_2^{-1} l_3, A_1) + w'(l_1^{-1} l_2, A_3) + w'(l_3^{-1} l_1, A_2) =$$ 
$$2\left(w'(l_2^{-1} l_3, A_1)+w'(l_1^{-1} l_2, A_3)+w'(l_3^{-1} l_1, A_2)\right). \qquad(\text{tr})$$

\begin{figure}[ht]\centering 
\compile{
    \includegraphics[scale=0.7]{triodic_gauss.eps}
    }
    \caption{Тройка ломаных с триодическим числом $\pm3$}
    \label{f:k31}
\end{figure}

\begin{example}[ср. пример \ref{e:cy}]\label{e:triod}
    (a) Триодическое число трех отрезков, соединяющих вершины правильного треугольника с его центром, равно $\pm1$ (рисунок~\ref{f:triod}, слева). 

    (b) Триодическое число тройки ломаных на рисунке \ref{f:k31} равно $\pm3$. 
    
    (c) Для любого вложения $f:K_{3,1}\to\R^2$ триодическое число равно $\pm1$. 
    Это доказывается индукцией по количеству вершин в ломаных из определения вложения графа в плоскость. 
    
    (d) Перестановка ломаных триодической тройки ломаных умножает их триодическое число на знак перестановки.
\end{example}

\begin{proposition}[ср. утверждение \ref{pr:off}] \label{pr:triod}
    Триодическое число любой триодической тройки ломаных  нечетно. 
\end{proposition}

\begin{proof}
    Обозначим через $l_1,l_2,l_3$ ломаные, образующие триодическую тройку, и соединяющие точку $O$ с точками $A_1,A_2,A_3$ соответственно.
    Имеем
    $$\tr(l_1, l_2, l_3) \stackrel{(1)}{=} 
    2\left( \frac{\angle A_2A_1A_3}{2\pi}+k_1 +\right.\left. \frac{\angle A_3A_2A_1}{2\pi}+k_2 + \frac{\angle A_1A_3A_2}{2\pi}+k_3\right)=$$
    $$=2(k_1+k_2+k_3)+2\left(\frac{\angle A_2A_1A_3+\angle A_3A_2A_1+\angle A_1A_3A_2}{2\pi}\right)=2(k_1+k_2+k_3)+1$$
    для некоторых целых $k_1, k_2, k_3$. 
    Здесь равенство (1) следует из определения (tr) и утверждения \ref{p:w'}.
\end{proof}

\begin{example}[ср. пример \ref{p:off}]\label{p:triod}
    Для любого целого $n$ существует триодическая тройка ломаных, триодическое число которой равно $2n+1$. (См. рисунок \ref{f:triod5}, слева, для $n = 2$.)
\end{example}

\begin{proof}[Построение]
Возьмем правильный треугольник $A_1A_2A_3$, вершины которого пронумерованы против часовой стрелки.
Обозначим через $l_1', l_2, l_3$ отрезки, соединяющие вершины $A_1,A_2,A_3$ треугольника с его центром $O$ соответственно.  
Обозначим через $l_1$ ломаную, полученную в результате $|n|$ положительных/отрицательных пальцевых движений (рисунок~\ref{f:fingermove}) ломаной $l_1'$ вокруг вершины $A_3$ для положительного/отрицательного $n$ соответственно. 
Тогда 
$$\tr(l_1, l_2, l_3)=2\left( \frac{\angle A_2A_1A_3}{2\pi}+\frac{\angle A_3A_2A_1}{2\pi}+\frac{\angle A_1A_3A_2}{2\pi}+n\right)=1+2n.$$ 
\end{proof}

\begin{remark}\label{r:cyc}
    (a) \emph{Степенью} замкнутой гладкой кривой называется число оборотов касательного вектора при обходе кривой. 
    \emph{Степенью} замкнутой ломаной $A_1\ldots A_m$ называется число 
    $$\frac{(\pi-\angle A_1A_2A_3)+(\pi-\angle A_2A_3A_4)+\ldots+(\pi-\angle A_{m-1}A_mA_1)+(\pi-\angle A_mA_1A_2)}{2\pi}.$$ 
    Циклическое число схоже со степенью замкнутой ломаной, но отличается от нее.  

    (b) Триодическое число 
    можно выразить через циклическое так:
    если $l_1, l_2, l_3$ --- триодическая тройка ломаных, то $l_1^{-1}l_2, l_2^{-1}l_3, l_3^{-1}l_1$ --- циклическая тройка ломаных и 
    $$\tr(l_1, l_2, l_3) = \cy(l_1^{-1}l_2, l_2^{-1}l_3, l_3^{-1}l_1).$$

    (c) Выберем произвольную точку $O$.
    Обозначим через $l + B$ и $l-B$ ломаные, полученные из ломаной $l$ параллельным переносом на векторы $\overrightarrow{OB}$ и $\overrightarrow{BO}$ соответственно.
    Обозначим через $-l$ ломаную, полученную из ломаной $l$ центральной симметрией относительно точки $O$.
    
    Тогда циклическое число циклической тройки ломаных $l_1 = A_1 \ldots A_2,~ l_2 = A_2 \ldots A_3,~ \linebreak l_3 = A_3 \ldots A_1$ равно числу оборотов вокруг точки $O$ замкнутой ломаной $l$, полученной конкатенацией ломаных $l_2 - A_1,~ -l_1 + A_3,~ l_3 - A_2,~ -l_2 + A_1,~ l_1 - A_3,~ -l_3 + A_2$. 

    Теорема~\ref{t:borsuk} Борсука--Улама, примененная к замкнутой ломаной $l$, дает альтернативное доказательство утверждения \ref{pr:off} (нечетности циклического числа).
    Аналогично доказывается утверждение \ref{pr:triod} (нечетность триодического числа).
   
    \aronly{(d) Возьмем циклическую тройку ломаных $l_1, l_2, l_3$.
    Представим ломаную $l_i$ в качестве \emph{PL отображения} $p_i : [0, 1] \to \R^2$ для каждого $i = 1, 2, 3.$
    Тогда циклическое число для тройки $l_1, l_2, l_3$ равно числу оборотов вектора $p_1(x) + p_2(y) + p_3(z) - p_1(0) - p_2(0) - p_3(0)$ при обходе по следующему циклу, идущему по ребрам куба $[0, 1]^3$: 
    $$(0, 0, 1) \to (0, 1, 1) \to (0, 1, 0) \to (1, 1, 0) \to (1, 0, 0) \to (1, 0, 1) \to (0, 0, 1).$$ 
    
    (e) Возьмем любые два непрерывных отображения $f, g : S^1 \to \R^2$, такие что $f(s) \neq g(s)$ для каждой точки $s$ окружности $S^1$.
    Обозначим через $w(f-g)$ число оборотов, которое совершает ненулевой вектор $f(s) - g(s)$ при обходе окружности против часовой стрелки.
    Тогда для любой
    
    $\bullet$ точки и замкнутой ломаной, не проходящей через нее,

    $\bullet$ циклической тройки ломаных,

    $\bullet$ триодической тройки ломаных

    найдутся такие $f$ и $g$, что число $w(f-g)$ равно числу оборотов, циклическому и триодическому числу соответственно.
    (Аналогичное утверждение верно и для числа $\partial(l \times p)$, определенного перед леммой \ref{l:square}.)
    }
\end{remark}

\section{Связь инвариантов друг с другом}\label{s:gawhap}

Для слабого почти вложения $f:K\to\R^2$ и $K_{3, 1}$-подграфа в $K$ с долями $i,j,k$ и $v$ обозначим
$$\tr_f(ijk, v) := \tr(f|_{vi}, f|_{vj}, f|_{vk}).$$

Для слабого почти вложения $f:K\to\R^2$ и простого цикла $ijk$ в $K$ обозначим
$$\cy_f(ijk) := \cy(f|_{ij}, f|_{jk}, f|_{ki}).$$

Для $K = K_4$ 
обозначим 
$$\tr_f(v) := \tr_f(C_v, v) \quad \text{и} \quad \cy_f(v):=\cy(C_v).$$

\begin{proposition}\label{p:wu-conj0} 
Для любого слабого почти вложения $f : K_4 \to \R^2$ имеем
$$2W_f = \cy_f(1) - \cy_f(2) + \cy_f(3) - \cy_f(4).$$
\end{proposition}

\begin{proposition}\label{p:wu-conj} 
Для любого почти вложения $f:K_4\to\R^2$ имеем

(a) $W_f = \tr_f(4) = -\tr_f(3) = \tr_f(2) = -\tr_f(1)$;

(b) $2w_f(j) = \cy_f(j) + (-1)^j\tr_f(j)$ для каждого $j = 1, 2, 3, 4$.
\end{proposition}

Доказательства утверждений \ref{p:wu-conj0} и \ref{p:wu-conj} приведены в \S\ref{s:gawhap}.A. 

Нечетность числа $W_f$ для почти вложения $f:K_4\to\R^2$ (теорема \ref{t:radonae}) вытекает из утверждения \ref{p:wu-conj}.a и нечетности триодического числа (утверждение \ref{pr:triod}).
(Это доказательство нечетности числа $W_f$ дает альтернативное доказательство топологической теоремы Радона \ref{t:radon} для плоскости с заменой отображения на почти вложение.)

Утверждение \ref{p:wu-conj} позволяет выразить все триодические и циклические числа через оборотные, а вместе с утверждением \ref{p:wu-conj0} --- все триодические и оборотные через циклические.



\begin{remark}[соотношения между циклическими числами почти вложения графа $K_4$]
\label{r:cy-k4}
    Из теоремы~\ref{t:radonae} и утверждения~\ref{p:wu-conj0} следует, что \emph{для любого почти вложения $f:K_4\to\R^2$ верно}
    $$\cy_f(1) - \cy_f(2) + \cy_f(3) - \cy_f(4) \equiv 2 \pmod 4.$$
    Ввиду теоремы~\ref{t:k4} и утверждения~\ref{p:wu-conj}, между этими четырьмя циклическими числами нет других соотношений.
    \aronly{Более точно, 
    \emph{для любых нечетных чисел $m_1, m_2, m_3, m_4$, для которых $m_1 - m_2 + m_3 - m_4 \equiv 2 \pmod 4$, существует почти вложение $f : K_4 \to \R^2$, такое что $\cy_f(j) = m_j$ для каждого $j = 1,2,3, 4$}.

    \emph{Доказательство.}
    Обозначим $W := (m_1 - m_2 + m_3 - m_4)/2$.
    По теореме~\ref{t:k4} существует почти вложение $f : K_4 \to \R^2$, такое что $2w_f(j) = m_j + (-1)^jW$.
    Оно и является искомым.
    Действительно, для каждого $j = 1, 2, 3, 4$ имеем
    $$\cy_f(j) \stackrel{(1)}{=} 2w_f(j) + (-1)^{j-1}\tr_f(j) = m_j + (-1)^jW + (-1)^{j-1}\tr_f(j)\stackrel{(2)}{=}  m_j + (-1)^j(W - W_f) \stackrel{(3)}{=} m_j.$$
    Здесь равенство (1) верно по утверждению \ref{p:wu-conj}.b, (2) --- по утверждению \ref{p:wu-conj}.а, (3) --- поскольку $W_f = W$ как следствие определений чисел $W_f$ и $W$.
    }
\end{remark}


\begin{remark}[соотношения между инвариантами почти вложения графа $K_5 - 45$]
\label{r:relations-k5-e}
(a) Для отображения $f : K_5 - 45 \to \R^2$ обозначим через $f_j$ сужение отображения $f$ на $K_4$-подграф, содержащий вершину $j = 4, 5$.
Напомним, что для почти вложения $f : K_5 - 45 \to \R^2$ мы обозначаем $W_f := W_{f_4}$.
Из утверждения~\ref{p:k4-k5} следует (см. замечание~\ref{r:k5-e}), что 
\emph{для любого почти вложения $f:K_5 - 45 \to \R^2$ верно}
$$W_f := W_{f_4} = -W_{f_5}.$$

\begin{figure}[ht]\centering
\compile{
    \includegraphics[scale=0.9]{k23-conj.eps}
    }
\caption{Почти вложение $f : K_{3,2} \to \R^2$, для которого $\tr_f(123, 4)=\tr_f(123, 5) \neq -\tr_f(123, 5)$}
    \label{f:k23-conj}
\end{figure}

(b) Из пункта (a) и утверждения \ref{p:wu-conj}.a следует, что 
\emph{для любого почти вложения $f : K_5 - 45 \to \R^2$ верно}
$$W_f = \tr_f(123, 4) = -\tr_f(123, 5).$$

Ср. рисунок~\ref{f:k23-conj}.

(c) Из пункта (b) и утверждения \ref{p:wu-conj}.b следует, что 
\emph{для любого почти вложения $f : K_5 - 45 \to \R^2$ верно}
$$\cy_f(123) = w_f(123, 4) + w_f(123, 5).$$
\end{remark}

\section*{\ref{s:gawhap}.A.
Доказательства утверждений \ref{p:wu-conj0} и \ref{p:wu-conj}} 

Для слабого почти вложения $f:K \to \R^2$, ориентированного цикла $C$, ориентированного ребра $ij$ и вершины $v$, не лежащей ни на ребре $ij$, ни на цикле $C$, обозначим
$$C \times_f v := w_f(C, v) \quad \text{ и } \quad  ij \times_f v  := w'(f|_{ij}, f(v)).$$
Смысл новых обозначений объяснен в замечании \ref{r:delpro-mot} и \S\ref{s:Wu}.  

Доказательство утверждения \ref{p:wu-conj0} просто получается из определения числа $W_f$, а также следующих равенств: 

$\bullet$  $ij \times_f v + ji \times_f v = 0$ для любых ориентированного ребра $ij$ и вершины $v \not \in ij$;

$\bullet$ $ijk \times_f v = ij \times_f v + jk \times_f v + ki \times_f v$, если $\{i,j,k,v\}=[4]$;

$\bullet$ $\cy_f(v)=2(ij \times_f k + ki\times _f j + jk \times_f i)$, если $\{i,j,k, v\} = [4]$.

Первое равенство очевидно. 
Второе и третье --- переписанные определения.  
Эти равенства и аналог третьего из них для триодического числа также будут использоваться (без их упоминания) в доказательстве утверждения \ref{p:wu-conj}. 

Следующее обозначение и лемма пригодятся в доказательстве утверждения~\ref{p:wu-conj}.
Для ломаных $A\ldots B$ и $C\ldots D$ обозначим 
$$\partial(A\ldots B \times C\ldots D) := w'(A\ldots B,D)+w'(C\ldots D,A)-w'(C\ldots D,B)-w'(A\ldots B,C).$$ 
Это число определено, когда ни один из концов каждой ломаной не лежит на объединении звеньев другой ломаной. 
(Представим ломаные $A\ldots B$ и $C\ldots D$ в качестве \emph{PL отображений} $p,q:[0,1]\to\R^2$.
Тогда $\partial(A\ldots B \times C\ldots D)$ есть число оборотов вектора $p(x)-q(y)$ при обходе границы $\partial([0,1]^2)$ квадрата  $[0,1]^2$ по часовой стрелке.)

\begin{lemma}\label{l:square} 
Для любых непересекающихся ломаных $l$ и $p$ выполнено $\partial(l\times p)=0$. 
\end{lemma}

Доказательство этой классической леммы см. в \cite[Лемма 2.5.a]{ABM+}.

\begin{figure}[ht]\centering
    \includegraphics[scale=1.2]{table-copy.eps}
    \caption{Различные группировки слагаемых в сумме $W_f$}
    \label{f:table}
\end{figure}

\begin{proof}[Доказательство утверждения \ref{p:wu-conj}]

    Для $\{i, j, p, q\} = [4]$ обозначим 
    $$\partial_f(ij\times pq) := ij \times_f p  + pq \times_f j + ji \times_f q + qp \times_f i.$$

    Поскольку $f$ является почти вложением, то по лемме \ref{l:square} имеем $\partial_f(ij \times pq) = 0$, если $\{i, j, p, q\} = [4]$.
   
    Первое равенство пункта (a) доказывается так:
    $$W_f \stackrel{(1)}{=}$$ 
    $$= 2\left(24 \times_f 1 + 43 \times_f 1 + 14 \times_f 3 +\right. \left. 42 \times_f 3 + 34 \times_f 2 + 41 \times_f 2\right) +$$ 
    $$+ \partial_f(12\times43) + \partial_f(23\times41) + \partial_f(31\times42) =$$ 
    $$= \tr_f(4).$$

    Для доказательства равенства (1) посмотрим на рисунок \ref{f:table} (на котором индексы $f$ удалены).
    Вне прямоугольника написана сумма в соответствующей строке или в соответствующем столбце чисел в прямоугольнике.
Сумма всех чисел в прямоугольнике равна сумме чисел справа от него и равна $W_f$.
Она также равна сумме чисел снизу от него, т.~е. правой части равенства (1).

Аналогично доказывается, что $W_f$ равно другим триодическим числам из пункта (а) (берем некоторые числа сверху, а некоторые снизу от прямоугольника).
Это следует также из кососимметричности (утверждения \ref{p:autom} и \ref{e:triod}.d). 

Аналогично доказывается равенство $W_f = 2w_f(j)-\cy_f(j)$ (например, для $j=4$ берем числа только сверху от прямоугольника). 
Из него следует пункт (b). 
Иными словами и напрямую, для $j = 4$ имеем    
$$\cy_f(4) + \tr_f(4) - 2w_f(4) \stackrel{(2)}{=} 
2\left(\partial_f(12\times 34)-\partial_f(13\times 24) -\partial_f(14\times 23)\right) = 0.$$
(Как можно придумать равенства (1) и (2), написано в замечании \ref{r:delpro-mot} и \S\ref{s:Wu}.)
\end{proof}


\begin{remark}\label{r:delpro-mot} 
(a) При обосновании равенств (1) и (2) из доказательства утверждения~\ref{p:wu-conj} (а также других равенств, иллюстрируемых рисунком~\ref{f:table}) не обязательно помнить ни о сумме углов, через которую определяются оборотное, циклическое и триодическое числа, ни об отображении $f$. 
Упростим эти равенства, убрав индексы $f$.
Это упрощение --- проявление того, что за этими равенствами стоят соотношения между \emph{целочисленными одномерными циклами} во \emph{взрезанном квадрате} графа $K_4$, см. \S\ref{s:Wu}.
Эти соотношения можно применять не только к доказательству утверждений типа \ref{p:wu-conj}, но и к их придумыванию (ибо вместо формул можно смотреть на картинки, см. п. (b)). 
Эти соотношения можно применять и к другим задачам, см. ссылки в \cite[Предисловие]{MNS}.

\begin{figure}[ht] \centerline{
    \includegraphics[width=6cm]{cuboctahedron.eps}
    \qquad
    \includegraphics[width=7cm]{del4.eps}
    }
    \caption{Слева: кубооктаэдр --- взрезанный квадрат графа $K_4$ (точнее, \emph{симплициальный взрезанный квадрат}, см. определение в \cite[\S8.2]{Sk} или \cite[\S4, The van Kampen obstruction mod 2]{Sk06}, графа $K_4$).
    Справа: взрезанный бокс-квадрат графа $K_4$ (вершины $i \times j$ сокращаются до $ij$).} 
    \label{f:del3}
\end{figure} 

(b) (здесь мы неформально объясняем, как придумать и геометрически интерпретировать утверждение \ref{p:wu-conj}; см. формализацию в \S\ref{s:Wu})
На правом рисунке \ref{f:del3} (ср. \cite[\S1.9]{MNS}) 

$\bullet$ $4\times 123$ "--- ориентированный контур центрального треугольника, 

$\bullet$ $123\times4$ "--- ориентированная окружность,

$\bullet$ $\partial(14\times 23)$ "--- ориентированная граница центральной верхней трапецевидной грани $14\times 23$, 

$\bullet$ $\partial(23\times14)$ "--- ориентированная граница центральной нижней криволинейной грани $23\times14$, 

$\bullet$ ориентированные циклы $\adiag(123)$ и $\triod(123, 4)$ выделены синим и красным соответственно.

Каждый из циклов  $\adiag(123)$ и $\triod(123, 4)$ разбивает кубооктаэдр на две равные части.

Магическая формула для равенства (1) из доказательства утверждения~\ref{p:wu-conj}:
    $$\triod(123,4) + \partial(12\times 43)+\partial(23\times 41)+\partial(31\times 42)=
    -234\times 1 + 134\times 2 - 124\times 3 + 123\times 4.$$
Магическая формула для равенства (2):
    $$\adiag(123) + \triod(123,4) =$$ 
    $$123\times 4 + 4\times 123 + \partial(31\times 24) + \partial(24\times 13) + \partial(12\times 34) + \partial(34\times 21) + \partial(14\times 32) + \partial(23\times 14).$$
\end{remark}

\section{Более общие инварианты: числа Ву}
\label{s:Wu}

\begin{remark}\label{r:noalis}
(a) Аналог циклического числа можно определить для замкнутой ломаной, составленной из нескольких ломаных $l_1,l_2,\ldots,l_n$, образующих слабое почти вложение графа-цикла.
Например, для $n=4$ можно взять
$$\cy(l_1,l_2,l_3,l_4) := w’(l_2l_3, A_1) + w’(l_3l_4, A_2) + w’(l_4l_1, A_3) + w’(l_1l_2, A_4).$$

Если ломаные образуют почти вложение графа-цикла, то
$$\cy(l_1,l_2,l_3,l_4) = \cy(l_1l_2, l_3, l_4) = \cy(l_1, l_2l_3, l_4) = \cy(l_1, l_2, l_3l_4) = \cy(l_4l_1, l_2, l_3)$$
ввиду леммы \ref{l:square}, примененной к парам сужений отображения на несмежные ребра цикла.

(b) Для любого почти вложения $f: K_4 \to \R^2$ имеем 
$$\cy(f|_{12}, f|_{23}, f|_{34}, f|_{41}) = \sum_{j = 1}^4w_f(C_j, j).$$
(Ср. утверждение \ref{p:wu-conj}.a.)
Это равенство получается применением леммы \ref{l:square} к паре сужений отображения на ребра 13 и 24.
Вместе с равенствами из п. (а) и утверждением \ref{pr:off} оно дает еще одно доказательство теоремы \ref{t:radonae}.

(c) Аналоги триодического числа можно определить для $n$ ломаных с общей вершиной, образующих слабое почти вложение графа $K_{n,1}$.
\end{remark}

\emph{Взрезанным бокс-квадратом} $K^{\Box \underline 2}$ графа $K$ называется граф,

$\bullet$ вершинами которого являются упорядоченные пары $i \times j$ \emph{различных} вершин $i, j$ графа $K$;

$\bullet$ ребро между вершинами $i \times p$ и $j \times q$  проводится, если либо $i = j$ и $pq$ --- ребро в $K$, либо $ij$ --- ребро в $K$ и $p = q$; эти ребра обозначаются $i \times pq$ и $ij \times p$ соответственно.

\begin{remark}[определение взрезанного квадрата графа\aronly{ и клеточной структуры на его подмножестве}]
\label{r:delpro-def}
Множество $\widetilde K$ пар различных точек (тела\footnote{Рассмотрим произвольный симплекс, вершины которого отождествлены с вершинами графа~$K$. Тогда \emph{телом} графа~$K$ называется объединение тех ребер симплекса, которые соответствуют ребрам графа~$K$.}) графа $K$
называется 
\textit{взрезанным квадратом} графа $K$, т.~е.
$
\widetilde K := \{ (x, y) \in K \times K : x \neq y \}.
$
(Более точно, в предыдущей формуле нужно заменить $K \times K$ на $|K| \times |K|$, где $|K|$ "--- тело графа $K$.)

\aronly{
Далее, под вершинами и ребрами графа~$K$ мы также подразумеваем соответствующие подмножества тела графа~$K$.

Во взрезанном квадрате графа $K$ 
имеется естественное подмножество
$$\cup \{ \sigma\times\tau\ : \sigma, \tau\ \textrm{---  симплексы в}\ K,\ \sigma\cap\tau = \emptyset\},$$
называемое \textit{симплициальным взрезанным квадратом} графа $K$ (ср. с определениями в \cite[\S2.4]{MNS}, \cite[\S5]{Sk06}). На этом подмножестве имеется естественная клеточная структура: 

$\bullet$ $\{ v_1 \times v_2\ : v_1, v_2\textrm{ --- различные вершины в }K \}$ --- набор 0-клеток (вершин),

$\bullet$ $\{ v \times e,\ e \times v\ : v\textrm{ --- вершина в }K\text{, не принадлежащая ребру }e\text{ графа }K \}$ --- набор 1-клеток (ребер),

$\bullet$ $\{ e_1 \times e_2\ : e_1, e_2\textrm{ --- несмежные ребра в }K \}$ --- набор 2-клеток (граней).
}

Взрезанный бокс-квадрат $K^{\Box \underline 2}$ (более точно, его тело) является объединением ребер взрезанного квадрата $\widetilde K$ графа $K$ (более точно, 
\emph{симплициального взрезанного квадрата}; см. определение в \cite[\S8.2]{Sk} или \cite[\S4, The van Kampen obstruction mod 2]{Sk06}).
\end{remark}

Обозначим через $E$ множество ребер графа $K^{\Box \underline 2}$.
Назовем \emph{расстановкой} (чисел на ребрах графа $K^{\Box \underline 2}$) отображение $E \to \R$ (общепринятое название --- \emph{коцепь} или \emph{цепь}).
Определим \textit{сумму} расстановок $w_1, w_2$ и \textit{значение} расстановки $w$ на расстановке $C$ формулами
$$(w_1 + w_2)(e) := w_1(e) + w_2(e) \quad \text{ и }\quad  w(C) := \sum_{e \in E} w(e)C(e).$$

Зафиксируем произвольный набор ориентаций на ребрах графа $K$.
Для каждой вершины $v$ графа $K$ возьмем на ребрах $v\times e$ и $e\times v$ взрезанного бокс-квадрата  $K^{\Box \underline 2}$ направления, соответствующие направлению на ребре $e$ графа $K$.

Расстановка целых чисел называется (симплициальным) \emph{целочисленным 1-циклом} (в $K^{\Box \underline 2}$), если для каждой вершины сумма целых на входящих ребрах равна сумме чисел на исходящих (правило Кирхгофа).  

Сопоставим ориентированному циклу в $K^{\Box \underline 2}$ следующий целочисленный 1-цикл. 
Если ребро $e$ графа $K^{\Box \underline 2}$ не принадлежит ориентированному циклу, то поставим на $e$ число 0.
Иначе если ориентация, зафиксированная на ребре $e$, совпадает с ориентацией цикла, то поставим на $e$ число $+1$, а если противоположна, то $-1$.

Вот примеры ориентированных циклов в $K^{\Box \underline 2}$ (которым ввиду предыдущего абзаца сопоставляются соответствующие целочисленные 1-циклы):

$\bullet$ если $C = v_1\ldots v_n$ --- ориентированный цикл в графе $K$ и $v \not \in C$, то \emph{левым} и \emph{правым} циклом соответственно называются
$$C \times v := v_1 \times v,~\ldots, v_n \times v \quad \text{и} \quad v \times C := v \times v_1,~\ldots, v \times v_n;$$ 

$\bullet$ \emph{границей} для ориентированных несмежных ребер $ij, pq$ графа $K$ называется
$$\partial(ij \times pq) := i\times p,~ j \times p,~ j \times q,~ i \times q;$$ 

\begin{figure}[ht] \centerline
    \compile{
    \includegraphics[scale = 0.9]{figs/antiidiag3.eps}
    }
    \caption{Антидиагональный цикл, соответствующий циклу четной (слева) и нечетной (справа) длины для $k = 6$}
    \label{f:adiag}
\end{figure}

$\bullet$ для $K = K_3$ \emph{антидиагональным циклом} называется
$$\adiag(123) := 1\times2,~ 1\times3,~ 2\times3,~ 2\times 1,~ 3 \times 1,~ 3 \times 2;$$ 

$\bullet$ если $v_1\ldots v_{n}$ "--- ориентированный простой цикл в графе $K$, то \emph{антидиагональным циклом} (см. рисунок~\ref{f:adiag}) называется
\begin{align*}
    \adiag(v_1 \ldots v_{2k+1}) :=\ &v_1 \times v_{k+1},~ v_1 \times v_{k+2},~ v_2 \times v_{k+2},~ \ldots, ~ v_{k+1} \times v_{2k+1},\\
    &v_{k+1} \times v_{1},~ v_{k+2} \times v_{1},~ v_{k+2} \times v_{2},~ \ldots, ~ v_{2k+1} \times v_{k+1},\\
    \adiag(v_1 \ldots v_{2k}) :=\ &v_1 \times v_{k+1},~ v_2 \times v_{k+1},~ v_2 \times v_{k+2},~ \ldots, ~ v_{k+1} \times v_{2k},\\
    &v_{k+1} \times v_{1},~ v_{k+1} \times v_{2},~ v_{k+2} \times v_{2},~ \ldots, ~ v_{2k} \times v_{k+1}
\end{align*}
(этот цикл является дискретной версией цикла $\{(x, -x)\ :\ x\in S^1\}\subset S^1 \times S^1$ с некоторой ориентацией; он гомологичен околодиагональному циклу из \cite[\S1.7, перед задачей 1.7.2]{MNS}; в статье \cite{MNS} антидиагональным называется совсем другой цикл);

$\bullet$ для $K = K_{3,1}$ \emph{триодическим циклом} называется
$$\triod(123, 4) := 1 \times 2,~ 1 \times 4,~ 1\times3,~ 4\times3,~ 2\times3,~ 2\times4,~ 2\times1,~ 4\times1,~ 3\times1,~ 3\times4,~ 3\times2,~ 4\times2$$
(ср. определение триодического цикла \cite[\S1.9, после гипотезы 1.9.3]{MNS});

$\bullet$ \emph{триодическим циклом} также называется аналогичный цикл, соответствующий $K_{3,1}$-подграфу графа $K$.

Имеем

$\bullet$ если $C_1, C_2$ --- целочисленные 1-циклы в графе $K^{\Box \underline{2}}$, то их сумма $C_1 + C_2$ тоже является целочисленным 1-циклом;

$\bullet$ если $C$ --- целочисленный 1-цикл и $\alpha$ целое, то $\alpha  C$ тоже является целочисленным 1-циклом.

Для каждого целочисленного 1-цикла $C$ и почти вложения $f$ определим \textbf{$C$-число Ву} $w_f(C)$ как значение на $C$ следующей \textit{расстановки вращений} $w_f$.
Возьмем ребро $bc$ графа $K$, ориентированное от $b$ к $c$.
Для ребер $bc\times a$ и $a\times bc$ графа $K^{\square \underline2}$ определим
$$w_f(bc\times a)=w_f(a\times bc) := w'(f|_{bc},f(a)) \in \R.$$ 
Это количество оборотов вектора с началом в $f(a)$ и концом, пробегающим ломаную $f|_{bc}$.

\begin{proposition}\label{propome0} Для любого почти вложения $f : K \to \R^2$

(a) число $w_f(C)$ целое для произвольного целочисленного 1-цикла $C$;

(b) если в графе $K^{\Box \underline{2}}$ вершины $a \times b$ и $b \times a$ соединены ориентированным путем $l$, то число
$2w_f(l)$ целое и нечетное (ср. замечание \ref{r:cyc}.c; сопоставление ориентированному пути расстановки аналогично сопоставлению ориентированному циклу 1-цикла).
\end{proposition}

\begin{proposition}\label{propome} Для любого почти вложения $f : K \to \R^2$ имеем

(a) $w_f(C \times v) = w_f(v \times C) = w_f(C, v)$ для любых ориентированного цикла $C$ в $K$ и вершины $v \not \in C$;

(b) $w_f(\partial(e \times e')) = 0$ для любых ориентированных несмежных ребер $e, e'$ графа $K$ (по лемме \ref{l:square});

(c) $w_f(\adiag(123)) = \cy_f(4)$ и $w_f(\triod(123, 4)) = \tr_f(4)$ для $K = K_4$;

(d) $w_f(C_1 + C_2) = w_f(C_1) + w_f(C_2)$ и $w_f(\alpha C) = \alpha w_f(C)$ для любых целочисленных 1-циклов $C_1, C_2, C$ и целого $\alpha$.
\end{proposition}

Из равенства целочисленных 1-циклов следует равенство их чисел Ву.
Поэтому и ввиду утверждения~\ref{propome} магические равенства из замечания~\ref{r:delpro-mot}.b доказывают равенства (1) и (2) из доказательства утверждения~\ref{p:wu-conj}.


Два целочисленных 1-цикла называются \emph{гомологичными}, если их разность равна линейной комбинации границ $\partial(e \times e')$ с целыми коэффициентами.
Если два целочисленных 1-цикла гомологичны, то соответствующие им числа Ву совпадают ввиду утверждения \ref{propome}.b.
Соотношения между оборотными, циклическими и триодическими числами в утверждениях \ref{p:k4-k5}, \ref{p:k4-k33}, \ref{p:wu-conj0}, \ref{p:wu-conj} и замечаниях \ref{r:k5-e}, \ref{r:cy-k4}, \ref{r:relations-k5-e} возникают из гомологичности некоторых целочисленных 1-циклов.
Теоремы \ref{t:radonae}, \ref{t:k5-e}.a и утверждения \ref{pr:off}, \ref{pr:triod} имеют другую (но тоже алгебраическую) природу, связанную с теоремой Борсука--Улама \ref{t:borsuk}.
Теорема \ref{t:k5-e}.b --- существенно другой, геометрический результат.


Обозначим через $H_1(\t K, \Z)$ группу всех целочисленных 1-циклов в $K^{\Box \underline{2}}$ с точностью до гомологичности.
С точностью до изоморфизма \emph{инвариант Ву} почти вложения $f$ --- это гомоморфизм $w_f : H_1(\t K, \Z)\to\Z$, см. утверждение \ref{propome}.d\aronly{ и ср. замечание \ref{r:defwu}}.

\begin{problem}[ср. проблему \ref{pm:wind}]\label{pm:Wu}  Для данного плоского графа $K$ опишите гомоморфизмы $H_1(\t K, \Z) \to \Z$, равные $w_f$ для некоторого почти вложения $f : K\to\R^2$.

Возможно, это описание не имеет красивой формулировки. 
Тогда интересен алгоритм, который по гомоморфизму $H_1(\t K, \Z) \to \Z$ выясняет, равен ли он $w_f$ для некоторого почти вложения $f:K\to\R^2$.
\end{problem}

Группа $H_1(\t K_4, \Z)$ порождается всеми левыми и правыми циклами $C_j \times j$ и $j \times C_j$, см. рисунок~\ref{f:del3}. 
Поэтому для описания гомоморфизма $H_1(\t K_4, \Z) \to \Z$ достаточно задать его значения на этих левых и правых циклах, т.~е. задать значения всех оборотных чисел $w_f(j)$. 
Это рассуждение вместе с теоремами \ref{t:radonae} и \ref{t:k4} дает решение проблемы \ref{pm:Wu} для $K = K_4$. 

\begin{theorem}\label{t:poly-Wu}
    Пусть $K$ "--- граф выпуклого многогранника в $\R^3$.
    Для любого целочисленного 1-цикла $C$ существует линейная комбинация с рациональными коэффициентами триодических и оборотных чисел, равная $C$-числу Ву $w_f(C)$ для любого почти вложения $f: K \to \R^2$.
\end{theorem}

Далее, \emph{циклическим числом} почти вложения $f: K \to \R^2$ мы называем $C$-число Ву $w_f(C)$ для антидиагонального цикла $C := \adiag(v_1 \ldots v_n)$, соответствующего простому циклу $v_1 \ldots v_n$ в графе $K$. 

\begin{theorem}\label{t:Wu}
    Пусть $K$ "--- связный граф.
    Для любого целочисленного 1-цикла $C$ существует линейная комбинация с рациональными коэффициентами циклических и триодических чисел, 
    равная $C$-числу Ву $w_f(C)$ для любого почти вложения $f: K \to \R^2$.
    
\end{theorem}

Теоремы~\ref{t:poly-Wu} и \ref{t:Wu} являются частными случаями следующих двух теорем, соответственно, для гомоморфизма $w_f: H_1(\widetilde K; \Z) \to \Z$.

\begin{theorem}[{\cite{Al26}}]
\label{t:tr-le-ri}
    Пусть $K$ "--- граф выпуклого многогранника в $\R^3$.
    Обозначим через $C_1, \ldots, C_n$ все триодические, левые и правые циклы.
    Для любого целочисленного 1-цикла $C$ существует набор чисел $c_1, \ldots, c_n \in \Q$, такой что для любого гомоморфизма $\varphi: H_1(\widetilde K; \Z) \to \Z$ верно
    $\varphi(C) = \sum_{i = 1}^n c_i \varphi(C_i).$
\end{theorem}

\begin{theorem}[{\cite{Al26}}]
\label{t:tr-ad}
    Пусть $K$ "--- связный граф.
    Обозначим через $D_1, \ldots, D_m$ все триодические и антидиагональные циклы.
    Для любого целочисленного 1-цикла $C$ существует набор чисел $d_1, \ldots, d_m \in \Q$, такой что для любого гомоморфизма $\varphi: H_1(\widetilde K; \Z) \to \Z$ верно
    $\varphi(C) = \sum_{i = 1}^m d_i \varphi(D_i).$
\end{theorem}

Эти две теоремы доказаны в \cite{Al26}, опираясь на результаты статей \cite{Ho07, Dz25, BF09}.


\aronly{
\begin{remark}[эквивалентное определение чисел Ву]
Приведем определение расстановки $u_{f,l}$, отличной от $w_f$, для которого все числа $u_{f,l}(a \times bc)$ будут полуцелыми, а числа Ву (т.~е. значения на целочисленных 1-циклах) --- прежними.

Возьмем любую (неориентированную) прямую $l$ на плоскости, не параллельную ни одной прямой, соединяющей вершины ломаных, являющихся образами ребер графа.
Рассмотрим вектор с началом в $f(a)$ и концом, пробегающим ломаную $f|_{bc}$ в заданном (на ребре $bc$) направлении.
Определим полуцелое число $u_{f,l}(a \times bc)$ как полуразность количеств \emph{прохождений} этим вектором направления прямой $l$ в положительном и в отрицательном направлении (ср. \cite[п. 1.5.4]{Sk}.)

Иными словами, определим отображение $\t f:bc\to S^1$ ребра $bc$ в окружность формулой
$\t f(x):=\dfrac{f(x)-f(a)}{|f(x)-f(a)|}$.
Тогда $u_{f,l}(a \times bc)$ есть полусумма \emph{знаков} \cite[\S8]{Sk20} $\t f$-прообразов двух точек окружности, отвечающих прямой $l$.

Определим $u_{f,l}(e \times a)=u_{f,l}(a \times e)$.  
Построенную расстановку $u_{f,l}$ чисел на ориентированных  ребрах графа $K^{\square \underline2}$ назовем \emph{полуцелочисленной расстановкой вращений}.

Все числа Ву расстановок $u_{f,l}$ и $w_f$ равны ввиду целочисленной версии утверждения \ref{p:stokes} \cite[утверждение 2.4.a]{ABM+}.

\end{remark}



 


\begin{remark}[инвариант Ву]\label{r:defwu}
Приводимое определение аналогично определению в \cite{Ta95}, ср. \cite[\S4]{Sk06}. 

Назовем {\it коциклом} расстановку полуцелых чисел, значение которой на произвольной границе равно нулю. 
Например, расстановка $u_{f, l}$ является коциклом.
А вот другой важный пример коцикла:
{\it кограницей} $\delta v$ вершины $v$ графа $K^{\square\underline2}$ называется расстановка 

$\bullet$ чисел $+1/2$ на ребрах графа $K^{\square\underline2}$, входящих в $v$,

$\bullet$ чисел $-1/2$ на ребрах, выходящих из $v$, и

$\bullet$ нулей на остальных ребрах.

Ясно, что значение кограницы на любом целочисленном 1-цикле равно нулю.


Рассмотрим симметрию (инволюцию) $s$ на $K^{\square\underline2}$, переставляющую компоненты (то есть $s(x,y) =(y,x)$) и отображение, индуцированное этой симметрией, на расстановках.

{\it Симметризованной кограницей} вершины $v$ графа $K^{\square\underline2}$ называется расстановка
$\delta v+\delta sv$.

Расстановки полуцелых чисел на ориентированных ребрах графа $K^{\square\underline2}$ называются \emph{симметрично когомологичными}, если их разность является целочисленной линейной комбинацией симметризованных кограниц вершин этого графа.


\def\Wu{\mathop{\fam0 Wu}}

Обозначим через $H^1_s(\t K;\frac12\Z)$ группу симметричных коциклов  с точностью до симметричной когомологичности (см. замечание \ref{r:delpro-mot}).
\emph{Инвариантом Ву} $\Wu(f):=[u_{f,l}]\in H^1_s(\t K;\frac12\Z)$ называется класс 
полуцелочисленной расстановки вращений.

Корректность этого определения вытекает из утверждения \ref{differ}.b.
Ввиду утверждения \ref{propome0} разность
$\Wu(f)-\Wu(g)$ принимает значения в подгруппе $H^1_s(\t K;\Z)\subset H^1_s(\t K;\frac12\Z)$, полученной аналогично из расстановок \emph{целых} чисел.
Ср. \cite[\S9.3]{Sk}. 
\end{remark}


\begin{proposition}\label{differ} (a) Полуцелочисленные расстановки вращений 
для двух вложений на рисунке~\ref{mcle}.b не являются симметрично когомологичными (ни для какой прямой $l$).
То же для рисунка \ref{mcle}.c.

(b) Для любых прямых $l$ и $l'$ расстановки $u_{f,l}$ и $u_{f,l'}$ симметрично когомологичны.

(c) Для любой вершины $v$ графа $K^{\square\underline2}$ существуют прямые $l$ и $l'$, для которых
\linebreak
$u_{f,l}-u_{f,l'} = \delta v+\delta tv$.
\end{proposition}
 

}

\section{О классификации почти вложений}\label{s:class}

Два (почти) вложения $f,g:K\to\R^2$
называются {\it (почти) изотопными}, если одно можно непрерывно продеформировать в другое так, чтобы в процессе деформации отображение оставалось (почти) вложением.
Вот строгая формулировка этого условия: существует семейство (почти) вложений $f_t:K\to\R^2$, непрерывно зависящее от параметра $t\in[0,1]$, для которого $f_0=f$ и $f_1=g$.


\begin{figure}[h]\centering
\includegraphics[scale=0.35]{figs/embeddings.eps}
\caption{Различные вложения
(a) некоторого графа, (b) окружности, и (c) триода}\label{mcle}
\end{figure}

Например, пары вложений на каждом из рисунках \ref{mcle}.abc не изотопны (и даже не почти изотопны).
Это следует из того, что оборотное, циклическое, триодическое числа (а также числа Ву из~\S\ref{s:Wu}) являются инвариантами (почти) вложения 
относительно (почти) изотопии. 

\begin{theorem}\label{t:mcla}
Два вложения связного графа в плоскость изотопны тогда и только тогда, когда их сужения на любой триод и на любой несамопересекающийся цикл изотопны (т.~е. не таковы, как на рисунках~\ref{mcle}.bc).
\end{theorem}


Эту теорему можно сначала доказать для деревьев, а потом свести общий случай к случаю деревьев путем выделения максимального дерева.
Детали технически непросты (как это часто бывает для базовых результатов топологии плоскости).\footnote{Теорема \ref{t:mcla} сформулирована в книге \cite{Wu65} со ссылкой на статью Маклейна-Эдкиссона, которую не удалось найти в указанном там сборнике.
Специалисты по топологической теории графов подтверждают, что эта теорема известна (и верна).
В книге \cite{Wu65} теорема \ref{t:mcla} сформулирована для вложений даже {\it локально связного континуума} (в частности, {\it полиэдра}).}
Аналог теоремы \ref{t:mcla} без утверждения в скобках справедлив для вложений в сферу, тор и другие сферы с ручками.
Доказательство аналогично.

О классификации \emph{погружений} (т.~е. <<локальных вложений>>) графа в плоскость с точностью до \emph{регулярной гомотопии} (т.~е. <<локальной изотопии>>) см. \cite{Pe08, Pe16}.

\begin{remark}\label{r:gene} 
(a) Для вложения графа в плоскость рассмотрим циклические и триодические числа его сужений на всевозможные циклы и триоды в графе (см. замечание \ref{r:noalis}.a).
Ввиду теоремы \ref{t:mcla} \emph{для любого связного графа если все эти числа у двух вложений в плоскость равны, то эти вложения изотопны.} 
(Ср. аналог проблемы~\ref{pm:Wu} для <<вложений>> вместо <<почти вложений>>.)

(b)  Из п. (a) следует, что \emph{если два вложения связного графа в плоскость почти изотопны, то они изотопны}.

(c) Утверждение, аналогичное утверждению из п. (a), для почти изотопий почти вложений неверно (даже если к циклическим и триодическим числам добавить всевозможные числа Ву, в частности, оборотные числа). 
Это вытекает из того, что петля $aba^{-1}b^{-1}$ на плоскости без двух точек не гомотопна постоянной петле.
\end{remark}


\begin{conjecture} \label{c:tree}
    Для любого дерева (или хотя бы для графа $K_{n,1}$) если для двух почти вложений этого дерева равны все триодические числа сужений на всевозможные триоды, то эти почти вложения почти изотопны.
\end{conjecture}


\begin{remark}[аналог проблемы \ref{pm:wind} для вложений]\label{r:embeddings}

По утверждению~\ref{e:k3}.c, $w_f(C,v) \in \{-1, 0, 1\}$ для любой вершины $v \in K$ и любого ориентированного простого цикла $C \in K-v$. 


На данный момент \emph{явного} решения аналога проблемы~\ref{pm:wind} для вложений неизвестно.
Используя следующее описание, можно получить \emph{неявное} описание того, как значения инвариантов меняются при следующих операциях (3)--(0).


Хасслер Уитни описал (см. книги~\cite[\S4.3]{Di97},~\cite[\S2.6]{MT01} и обзор~\cite[p.7, Problem]{Sk05}) простые операции над вложениями графов в сферу, которые позволяют получить любое вложение из любого другого. 
Здесь мы рассматриваем вложения в сферу с точностью до гомеоморфизма сферы.
Нестрого говоря, это описание заключается в следующем.

(3) Планарный 3-связный граф имеет единственное вложение.



\begin{figure}[ht]\centering
\includegraphics[scale=1.]{graphs18.eps}   
\caption{Переворачивание блока}
\label{f:flipping}
\end{figure}

(2) Любое вложение 2-связного графа может быть получено из любого другого <<переворачиванием 3-связной компоненты (блока)>> (рисунок~\ref{f:flipping}).

(1) Любое вложение 1-связного (=связного) графа может быть получено из любого другого <<переворачиванием 2- и 3-связных компонент>>.

(0) Любое вложение 0-связного (=произвольного) графа может быть получено из любого другого <<переворачиванием 1-, 2- и 3-связных компонент>>. 




\end{remark}

\aronly{

\begin{remark}\label{p:wualis}
(a) Для любой прямой $l$ полуцелочисленные расстановки вращений любых (почти) изотопных (почти) вложений $f,f':K\to\R^2$ симметрично когомологичны. 
Поэтому инвариант Ву является инвариантом (почти) изотопии. 

(b) Для любой вершины $v$ графа $K^{\square\underline2}$ существует (почти) вложение $f':K\to\R^2$, (почти) изотопное (почти) вложению $f$, и прямая $l$, для которых $u_{f,l}-u_{f',l} = \delta v+\delta tv$. Это вытекает из утверждения~\ref{differ}.b.
\end{remark}

}

\section{Почти вложения для многомерного случая}\label{s:higher}

\emph{Гиперграфы} --- многомерные аналоги графов: ребрами могут быть не только двухэлементные, но и любые подмножества вершин. 

\begin{remark}\label{r:gap}
(a) Классическая задача топологии, комбинаторики и компьютерной науки --– нахождение критериев (и алгоритмов распознавания) реализуемости гиперграфов в евклидовом пространстве данной размерности $d$. 

\begin{figure}[ht]\centering
\compile{
    \includegraphics[scale=1.2]{e4h-halin.eps}
    }
    \caption{Двумерные гиперграфы, не вложимые в плоскость}
    \label{hj}
\end{figure}

Такой критерий получен классиками топологии (Эгберт ван Кампен, Арнольд Шапиро, Вэньцзюнь Ву, Андре Хэфлигер, Клод Вебер) в 1930-1960-е гг. для $2d\ge3k+3$, где $k$ --- размерность гиперграфа, см. обзор \cite[\S5]{Sk06}. 
Основанный на этом критерии полиномиальный алгоритм распознавания реализуемости получен в 2013 (Мартин Чадек, Мартин Крчал, Лукас Вокжинек), см. обзор \cite[\S3]{Sk18}.

Алгоритмическая неразрешимость для $2d<3k+2$ анонсирована в 2019 году на конференции в Обервольфахе (Марек Филаковский, Ульрих Вагнер и Стефан Жечев).
Дыру в доказательстве нашел \cite{Sk20e} в 2020 году А. Скопенков (она признана авторами).  
Не было доказано, что в многомерном аналоге для \emph{вложений} теоремы \ref{t:k5-e}.b (и примера \cite[Proposition 1.2]{KS20}) некоторый коэффициент зацепления равен $\pm1$. 

В 2020 году Роман Карасев и А. Скопенков показали, что для \emph{почти вложений} этот коэффициент зацепления может принимать любое нечетное значение \cite[Theorem 1.4]{KS20}. 
Их гипотеза об аналогичном результате для графов на плоскости опровергнута в 2023 году Тимуром Гараевым, см. теорему \ref{t:k5-e}.b \cite[Theorem 1]{Ga23}.

(b) В предыдущем пункте (и в других замечаниях этого параграфа), вложения подразумеваются кусочно-линейными (PL), а под реализуемостью понимается существование PL вложения. Очевидно, PL вложимость влечет топологическую (TOP) вложимость (см. определения PL и TOP вложимости в \cite[6.5.1.b]{Sk}).
PL и TOP вложимости $k$-гиперграфов в $\R^d$ равносильны при $d \geqslant k + 3$ \cite{Br72} или при $d = k+1 \in \{2, 3\}$ \cite{Mo77} (см. смежные результаты в \cite[6.5.1.c]{Sk}). 
\end{remark}

\begin{remark}[почти вложения в многомерном случае]
\label{r:ae_complex_mot}

В этом замечании вместо гиперграфов мы говорим о близком понятии \emph{(симплициальных) комплексов}, рассматриваемом в топологии.

Отображение $f:K\to Y$ комплекса $K$ в подмножество $Y\subset\R^d$ называется {\it почти вложением}, если образы несмежных (т.~е. не имеющих общих вершин) граней не пересекаются, т.~е. если $f\sigma\cap f\tau=\emptyset$ для любых несмежных граней $\sigma,\tau$.


(a) Почти вложения комплексов (в многомерные евклидовы пространства) фактически возникли в 1960-х годах у Вебера при изучении вложений комплексов, см. обзор \cite[\S8]{Sk06}. 
Это понятие явно сформулировано в \cite[\S4]{FKT}. 
См. также \cite[\S1, ‘Motivation and background’]{ST17}.
Почти вложения комплексов естественно возникают и применяются также 

$\bullet$ в комбинаторной геометрии 
(теоремы типа Хелли о выпуклых множествах;
см. \cite{Mat97, GPP+}), и 

$\bullet$ в топологической комбинаторике 
(тверберговские теоремы о многократных пересечениях;
см. обзор \cite[\S1]{Sk16}).

Вот недавние работы о почти вложениях комплексов в многообразия: \cite{ST17, KS20, PT19, Al22, DS22, KS21, Sk24}.


(b) \emph{Почему почти вложения, $\Z$- и $\Z_2$-вложения интересны в связи с изучением вложений?} 

См. определения $\Z$- и $\Z_2$-вложений в замечании~\ref{r:zz2} и в \cite[\S 1.1]{Sk24}, \cite[\S\S 6.8, 6.9]{Sk}.

Некоторые доказательства невложимости комплексов в $\R^d$ на самом деле показывают, что эти комплексы не почти вложимы в $\R^d$.
Некоторые из этих доказательств показывают, что эти комплексы даже не $\Z_2$-вложимы в $\R^d$ при $d=2k$.
Это справедливо, например, для

$\bullet$ границы $(d+1)$-симплекса (его не почти вложимость есть топологическая теорема Радона; см. теорему~\ref{t:radon} для $d=2$, обзоры \cite[\S1.1, $(TR_d)$]{Sk16}, \cite[\S2.2 и Theorem 3.1.5]{Sk18}, \cite[Theorem 1.4]{Sk14}),
и

$\bullet$ $k$-комплекса $\Delta_{2k+2}^k$ и $d=2k$ (его не-$\Z_2$-вложимость есть версия теоремы ван Кампена--Флореса; см. теорему~\ref{t:hatuvkfl} для $k=1$, обзоры \cite[\S1.1, $(VKF_{2k})$]{Sk16}, \cite[\S1.4 и Theorem 3.1.6]{Sk18}).


С другой стороны, некоторые построения вложений содержат построения почти или $\Z$-вложений как удобный промежуточный шаг, позволяющий структурировать доказательство и описать связь с известными результатами и методами.





О связях почти вложимости и $\Z$-, $\Z_2$-вложимости с вложимостью комплексов см. теоремы~\ref{t:vkswum},~\ref{r:pltop} и \cite[Theorem 1.2.1a]{Sk24}, \cite[\S6.8, \S6.9 and Remarks 8.2.5.ab]{Sk}.


(d) Аналог замечания~\ref{r:ae_graph_mot}.c справедлив для почти вложений комплексов в многообразия.

(e) Родственное, но другое понятие изучалось под названием почти вложение в \cite{Sk05i, Sk06c, CRS, CRS'}.


(f) Изучение \emph{вложений} $k$-комплексов в $2k$-многообразия при $k>1$ аналогично изучению \emph{$\Z_2$-вложений} (а не вложений) графов в поверхности.
Действительно, неравенство Эйлера $V-E+F\ge\chi(M)$ не доказано, а может быть и неверно, для $\Z_2$-вложений графов в поверхность $M$. 
Для изучения $\Z_2$-вложений графов в поверхность вместо неравенства Эйлера применяется форма пересечений поверхности. 
Критерии вложимости $k$-комплексов в $2k$-многообразие при $k>1$ также получены на языке формы пересечений, см. \cite{PT19, Sk24}.
Об идее, в некотором смысле заменяющей неравенство Эйлера, и о реализации этой идеи, см. \cite{Ka91}, \cite{Ad18}, \cite[\S6]{DS22}.



\end{remark}




\begin{theorem}[Вебер]\label{t:vkswum}
Если $d\ge\frac{3(k+1)}2$ и $k$-комплекс почти вложим в $\R^d$, то этот комплекс PL вложим в $\R^d$.
\end{theorem}


См. обзор \cite[\S8]{Sk06}; ср. \cite[Theorem 3.2.6]{Sk18}.


\begin{theorem}\label{r:pltop} 
Для любых $d,k$, таких что $k+2\le d\le\frac{3k}2+1$ (в частности, для $d=2k=4$) существует $k$-комплекс почти вложимый в $\R^d$, но не PL вложимый в $\R^d$.
\end{theorem}



См. \cite[Пример на стр. 338]{SSS}; ср. \cite{SS92};  
изложение идеи доказательства из \cite{SSS} приведено в обзоре \cite[\S7]{Sk06}.


Неизвестно, существует ли 2-комплекс почти вложимый в $\R^3$, но не вложимый в $\R^3$. 
(Несложно и известно, что конус над $K_5$ или над $K_{3,3}$ не вложим в $\R^3$, ср. \cite[\S4.4, Proof of Proposition 4.1.a]{Sk14}.
Конус над $K_5$ не почти вложим в $\R^3$ по теореме Конвея--Гордона--Закса \cite[Предложение 1.2.9.a]{Sk24'}.
Мы выдвигаем гипотезу, что конус над $K_{3,3}$ не почти вложим в $\R^3$.)



Родственный, но другой пример "--- это 2-комплекс, $\Z$-вложимый, но не почти вложимый в $\R^4$ \cite[\S3.2, \S3.3, \S4]{FKT}; 
доказательство, не использующее теорему Столлингса о центральных рядах групп, см. в \cite[Theorem 1.6]{AMSW} и \cite{Al22}.



\begin{theorem}[\cite{ST17}]
\label{t:nphhae}
Для любых фиксированных $d,k\ge2$, таких что $d=\frac{3k}2+1$, алгоритмическая задача распознавания почти вложимости $k$-комплексов в $\R^d$ является \textbf{NP}-трудной.
\end{theorem}


По-видимому, теорема верна и для $k+2\le d < \frac{3k}2+1$ \cite{Al22}.

Аналогичный результат для вложений справедлив и принадлежит Иржи Матушеку, Мартину Танцеру и Ульриху Вагнеру (см. \cite{MTW} и обзор {\cite[Theorem 3.2.8]{Sk18}}).

\begin{problem}\label{c:links}
Для каких $k,d$ существует алгоритм распознавания почти вложимости $k$-комплексов в $\R^d$?
\end{problem}


Для $d\ge3(k+1)/2$ существует (полиномиальный) алгоритм, поскольку для $d\ge3(k+1)/2$ и $k$-комплексов в $\R^d$

$\bullet$ почти вложимость эквивалентна PL вложимости  (теорема~\ref{t:vkswum}); и

$\bullet$ существует (полиномиальный) алгоритм распознавания PL вложимости (см. второй абзац в замечании~\ref{r:gap}).




{\it Книги, обзоры и методические статьи в этом списке помечены звездочками.}
 
\end{document}